\newtheorem{proposition}{Proposition}
\newtheorem{theorem}[proposition]{Theorem}
\newtheorem{lemma}[proposition]{Lemma}
\newtheorem{conjecture}[proposition]{Conjecture}
\theoremstyle{remark}
\theoremstyle{definition}
\numberwithin{equation}{section}
\numberwithin{proposition}{section}
\numberwithin{figure}{section}
\numberwithin{table}{section}
\newcommand{\N}{\mathbb{N}}
\newcommand{\R}{\mathbb{R}}
\newcommand{\E}{\mathbb{E}}
\renewcommand{\P}{\mathbb{P}}
\renewcommand{\le}{\leqslant}
\renewcommand{\ge}{\geqslant}
\renewcommand{\subset}{\subseteq}
\newcommand{\la}{\left\langle}
\newcommand{\ra}{\right\rangle}
\newcommand{\Ll}{\left}
\newcommand{\Rr}{\right}
\renewcommand{\d}{\mathrm{d}}
\DeclareMathOperator{\supp}{supp}
\renewcommand{\bar}{\overline}
\newcommand{\td}{\widetilde}
\newcommand{\1}{\mathds{1}}
\newcommand{\mcl}{\mathcal}
\newcommand{\al}{\alpha}
\newcommand{\be}{\beta}
\newcommand{\de}{\delta}
\newcommand{\si}{\sigma}
\newcommand{\dr}{\partial}
\newcommand{\vb}{\, \big \vert \, }
\renewcommand{\k}{^{(k)}}
\begin{document}

\author[J.-C. Mourrat]{J.-C. Mourrat}
\address[J.-C. Mourrat]{DMA, Ecole normale sup\'erieure,
CNRS, PSL University, Paris, France}
\email{mourrat@dma.ens.fr}

\keywords{spin glass, Hamilton-Jacobi equation, Wasserstein space}
\subjclass[2010]{82B44, 82D30}
\date{\today}

\title[Parisi's formula is a Hamilton-Jacobi equation in Wasserstein space]{Parisi's formula is a Hamilton-Jacobi equation \\ in Wasserstein space}

\begin{abstract}
Parisi's formula is a self-contained description of the infinite-volume limit of the free energy of mean-field spin glass models. We show that this quantity can be recast as the solution of a Hamilton-Jacobi equation in the Wasserstein space of probability measures on the positive half-line.
\end{abstract}

\maketitle

%
%
%
%
%
%

\section{Introduction}



Let $(\beta_p)_{p \ge 2}$ be a sequence of non-negative numbers, which for simplicity we assume to contain only a finite number of non-zero elements, and, for every $r \in \R$, let $\xi(r) := \sum_{p \ge 2} \beta_p r^p$. For every integer $N \ge 1$, let~$P_N$ denote a probability measure on~$\R^N$, which we often (but not always) assume to be such that
\begin{equation}
\label{e.ass.sphere}
\Ll\{\begin{aligned}
& \mbox{either $P_N$ is the uniform measure on $\{-1,1\}^N$ for every $N \ge 1$,} 
\\
& \mbox{or $P_N$ is the uniform measure on $\{\si \in \R^N \ : \ |\si|^2 = N\}$ for every $N \ge 1$.}
\end{aligned}
\Rr.
\end{equation}
We aim to study Gibbs measures built from the probability measure $P_N$ using as energy function the centered Gaussian vector
$(H_N(\sigma))_{\sigma \in \R^N}$ with covariance
\begin{equation*}  
\E \Ll[ H_N(\sigma) \, H_N(\tau) \Rr] = N \xi \Ll( \frac{\sigma \cdot \tau}{N} \Rr) \qquad (\sigma,\tau \in \R^N).
\end{equation*}
This Gaussian vector can be built explicitly using independent linear combinations of quantities of the form $\sum_{1 \le i_1,\ldots,i_p \le N} J_{i_1,\ldots,i_p} \sigma_{i_1} \cdots \, \sigma_{i_p}$, where $(J_{i_1,\ldots,i_p})$ are independent standard Gaussian random variables.
The Gibbs measures thus obtained are often called mixed $p$-spin models, possibly with the qualifiers ``spherical'' or ``with Ising spins'' when $P_N$ is the uniform measure on the sphere in $\R^N$ or on $\{-1,1\}^N$ respectively. The Sherrington-Kirkpatrick model corresponds to the case of Ising spins and $\xi(r) = \beta r^2$.
The \emph{Parisi formula} is a self-contained description of the limit free energy
\begin{equation*}  
\lim_{N \to \infty} \frac 1 N \E \log  \int \exp \Ll( H_N(\sigma) \Rr) \, \d P_N(\sigma).
\end{equation*}
The identification of this limit was put on a rigorous mathematical footing in \cite{gue03, Tpaper,Tbook1, Tbook2, pan}, after the fundamental insights reviewed in \cite{MPV}. 

\smallskip

The main goal of the present paper is to propose a new way to think about this result. This new point of view reveals a natural connection with the solution of a Hamilton-Jacobi equation posed in the space of probability measures on the positive half-line.
 For every metric space $E$, we denote by $\mcl P(E)$ the set of Borel probability measures on $E$, and by $\de_x$ the Dirac measure at $x \in E$. We also define, with $\xi^*$ defined below in \eqref{e.def.xi*},
\begin{equation*}  
\mcl P_*(\R_+) := \Ll\{ \mu \in \mcl P(\R_+) \ : \ \int_{\R_+} \xi^*(s) \, \d \mu(s) < \infty \Rr\} .
\end{equation*}
\begin{theorem}[Hamilton-Jacobi representation of Parisi formula]  
Assume \eqref{e.ass.sphere}, fix the normalization $\xi(1) = 1$, and let $(t,\mu) \mapsto f(t,\mu) : \R_+ \times \mcl P_*(\R_+) \to \R$ be the solution of the Hamilton-Jacobi equation
\label{t.main}
\begin{equation}  
\label{e.hj}
\Ll\{
\begin{aligned}
& \dr_t f - \int \xi(\dr_\mu f) \, \d \mu = 0 & \quad \text{on } \R_+ \times \mcl P_*(\R_+), 
\\
& f(0,\cdot) = \psi  &\quad \text{on } \mcl P_*(\R_+),
\end{aligned}
\Rr.
\end{equation}
where the function $\psi$ is described below in \eqref{e.def.psi} and Proposition~\ref{p.converge.psi}.
For every $t \ge 0$, 
\begin{equation}  
\label{e.main}
\lim_{N \to \infty} -\frac 1 N\E \log \int \exp \Ll( \sqrt{2t} H_N(\sigma) - N t\Rr)   \, \d P_N(\sigma) = f(t,\de_0). 
\end{equation}
\end{theorem}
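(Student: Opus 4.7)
The plan is to introduce, for each $N$, an enriched free energy $F_N(t,\mu) : \R_+ \times \mcl P_*(\R_+) \to \R$ that reduces to the left-hand side of \eqref{e.main} when $\mu = \de_0$, and then prove three things: (i) $F_N(0,\cdot) \to \psi$ pointwise as $N \to \infty$; (ii) $F_N$ approximately solves \eqref{e.hj} with an error that vanishes as $N \to \infty$; and (iii) well-posedness of \eqref{e.hj} in the Wasserstein space $\mcl P_*(\R_+)$ forces any subsequential limit of $F_N$ to coincide with $f$, so that evaluating at $\de_0$ recovers \eqref{e.main}.

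For the construction I would use the Aizenman--Sims--Starr / Ruelle probability cascade formalism. To $\mu \in \mcl P_*(\R_+)$ one associates a random weighted hierarchical tree $(w_\al)_{\al}$ and a centered Gaussian cavity field $Z^\mu_\al(\si)$ whose covariance encodes $\mu$, and sets
\[
F_N(t,\mu) := -\frac{1}{N} \, \E \log \int \sum_\al w_\al \exp\!\Ll( \sqrt{2t}\, H_N(\si) - N t + Z^\mu_\al(\si) \Rr) \, \d P_N(\si).
\]
At $\mu = \de_0$ the tree collapses, $Z^\mu \equiv 0$, and one recovers \eqref{e.main}. At $t = 0$ the $\si$-integral factorizes over the leaves of the cascade and $F_N(0,\mu)$ reduces to a pure cavity computation whose $N \to \infty$ limit is the functional $\psi(\mu)$ described in \eqref{e.def.psi} and Proposition~\ref{p.converge.psi}.

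For the approximate Hamilton-Jacobi equation I would differentiate in $t$ and apply Gaussian integration by parts to $H_N$, yielding $\dr_t F_N(t,\mu) = \E \langle \xi(R_{1,2}) \rangle + o_N(1)$, where $R_{1,2} = \si \cdot \si'/N$ is the spin overlap under two independent replicas of the enriched Gibbs measure. A parallel calculation of the Wasserstein derivative in $\mu$ shows that $\int \xi(\dr_\mu F_N) \, \d\mu$ equals $\E \langle \xi(Q_{1,2}) \rangle$, where $Q_{1,2}$ is the cascade overlap. The design of $Z^\mu$ ensures a matching of the two overlap laws under the enriched Gibbs measure—a property closely related to Panchenko's ultrametricity theorem applied inside the enrichment—and gives the desired approximate identity $\dr_t F_N - \int \xi(\dr_\mu F_N) \, \d\mu \to 0$.

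The main obstacle will be step (iii), together with a precise formulation of \eqref{e.hj}: one must define the Wasserstein derivative $\dr_\mu f$ on $\mcl P_*(\R_+)$ and prove a well-posedness result strong enough to identify the limit of $F_N$ with $f$. The natural route is a Hopf--Lax type variational representation whose value at $(t,\de_0)$ coincides with the classical Parisi variational formula, which would simultaneously establish existence and the sought representation. A secondary technical point is maintaining enough uniform-in-$N$ Wasserstein regularity of $F_N$ in $\mu$—using the moment control $\int \xi^* \, \d\mu < \infty$ on the cavity fields—to pass the approximate PDE to the limit.
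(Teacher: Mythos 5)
Your proposal does not prove the theorem, and it is not the route the paper takes. The paper is explicit that Theorem~\ref{t.main} is \emph{a rephrasing of known results}: its proof (Section~\ref{s.proof}) is a direct bookkeeping exercise that takes the established Parisi formula from Panchenko and Talagrand, rewrites it as in \eqref{e.borrow}, performs a change of variables $\mu = (t\xi')(\nu)$ so that the term $\int (s\xi'(s)-\xi(s))\,\d\nu(s)$ becomes the transport cost $\int \xi^*(t^{-1}s)\,\d\mu(s)$ (Step 2), massages the integrand via integration by parts under the normalization $\xi(1)=1$ (Step 3), and finally argues a localization step showing the supremum is unchanged when the support restriction is dropped (Step 4). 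No approximate PDE, no well-posedness result, and no overlap synchronization are invoked; the Hopf--Lax formula \eqref{e.hopflax} is taken as the \emph{definition} of $f$ precisely to sidestep those issues.

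Your plan (i)--(iii) is instead the research program that the author explicitly declines to carry out in this paper (``I will refrain from exploring this direction here and postpone it to future work''), and steps (ii) and (iii) as you state them contain unproven claims that are, in fact, the crux of the entire subject. On (ii): it is not true that $\dr_t F_N - \int \xi(\dr_\mu F_N)\,\d\mu \to 0$ for \emph{every} fixed $\mu$. The toy version \eqref{e.hj.RS} and the exact derivative formula \eqref{e.expr.drq} show that the discrepancy between $\E\la\xi(R_{1,2})\ra$ and $\int\xi(\dr_\mu \bar F_N)\,\d\mu$ is governed by the conditional fluctuations of the spin overlap $R_{1,2}$ given the cascade overlap $\al\wedge\al'$; this does not vanish in general, and establishing the required concentration/synchronization requires Ghirlanda--Guerra identities and ultrametricity---i.e.\ the full machinery that proves Parisi's formula in the first place. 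Asserting that ``the design of $Z^\mu$ ensures a matching of the two overlap laws'' is precisely the point that needs a proof. On (iii): there is no off-the-shelf comparison/uniqueness theory for viscosity solutions of \eqref{e.hj} in $\mcl P_*(\R_+)$ that you can invoke; this is a genuine open analytical difficulty, and the paper circumvents it by \emph{defining} $f$ via Hopf--Lax rather than proving it is the unique solution. In short, your proposal sketches what a future, self-contained proof of Parisi's formula might look like, but it leaves open the two hardest steps and bears little resemblance to the paper's actual (short, verification-style) argument.
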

Interestingly, the evolution equation in \eqref{e.hj} depends on the correlation function~$\xi$ but not on the measures $P_N$, while, as will be seen below, the initial condition~$\psi$ depends on the measures $P_N$ but not on $\xi$. We postpone a precise discussion of the meaning of the equation~\eqref{e.hj}, and start by explaining the background and motivations for looking for such a representation.

\smallskip

Recently, a new rigorous approach to the identification of limit free energies of mean-field disordered systems was proposed in \cite{HJinfer,HJrank}, inspired by \cite{gue01, barra1,barra2}.
The idea proposed there is to place the main emphasis on the fact that after ``enriching'' the problem, we can identify the limit free energy as the solution of a Hamilton-Jacobi equation. At least for the problems considered there, one can show that finite-volume free energies already satisfy the same Hamilton-Jacobi equation, except approximately. In particular, the approach allows for a convenient breakdown of the proof into two main steps: a first, more ``probabilistic'' part, which aims at showing that finite-volume free energies indeed satisfy an approximate Hamilton-Jacobi equation; and a second, more ``analytic'' part, which takes this information as input and concludes that the limit must solve the equation exactly. 

\smallskip

The problems studied in \cite{HJinfer,HJrank} relate to statistical inference. They possess a particular feature that enforces ``replica symmetry'', and this allows for a complete resolution of the problem by adding only a finite number of extra variables to the problem. As is well-known, this is not the case for mean-field spin glasses as those considered here. The relevant Hamilton-Jacobi equation, if any, must therefore be set in an infinite-dimensional space. 

\smallskip

The identity of this Hamilton-Jacobi equation is revealed by Theorem~\ref{t.main}. The aim of the present paper is to demonstrate the presence of this structure, and we will therefore simply borrow formulas from the literature for the limit on the left side of~\eqref{e.main}, and check that the expressions found there agree with the right side of~\eqref{e.main}. Hence, I want to stress that Theorem~\ref{t.main} is \emph{a rephrasing of known results}. 

\smallskip

However, I believe that Theorem~\ref{t.main} can be useful in furthering our understanding by providing a new way for us to think about these results---see also \cite{thurston} for general considerations on the relevance of such endeavors. In the long run, I hope indeed that this new interpretation of the Parisi formula will suggest a new and possibly more robust and transparent approach to the identification of the limit free energy of disordered mean-field systems. For this purpose, it will be important to rely on stability estimates for the Hamilton-Jacobi equation~\eqref{e.hj} (that is, estimates asserting that a function satisfying the equation approximately must be close to the true solution). This should leverage on powerful approaches to the well-posedness of Hamilton-Jacobi equations such as the notions of viscosity or weak solutions, as exemplified in the finite-dimensional setting in \cite{HJinfer} and \cite{HJrank} respectively. Since the purpose of the present paper is only to demonstrate the presence of the Hamilton-Jacobi structure, I will refrain from exploring this direction here and postpone it to future work. Indeed, since only exact identifications will be called for here, we can contend ourselves here with the more immediate definition of the solution of~\eqref{e.hj} based on the Hopf-Lax formulation.

\smallskip

This Hopf-Lax formulation features an optimal transport problem involving the cost function $(x,y) \mapsto \xi^*(x-y)$, where $\xi^*$ is the convex dual of $\xi$ defined by
\begin{equation}  
\label{e.def.xi*}
\xi^*(s) := \sup_{r \ge 0} (rs - \xi(r)).
\end{equation}
Notice that the function $\xi$ is convex on $\R_+$, and the precise way to interpret $\xi^*$ is as the dual of the convex and lower semicontinuous function on $\R$ which coincides with $\xi$ on $\R_+$ and is $+\infty$ otherwise.
(The functions $\bar F_N$ and the solution $f$ to \eqref{e.hj} share a monotonicity property which can be interpreted as $\dr_{\mu} f \ge 0$ in a weak sense, and thus modifying $\xi$ on $\R \setminus \R_+$ is irrelevant to the interpretation of \eqref{e.hj}---see also~\cite{HJrank} for a more precise discussion of this point in finite dimension, as well as Lemma~\ref{l.pos.dr_q} below).

\smallskip

Optimal transport problems for measures on the real line are in some sense trivial, in that the couplings between pairs of measures can be realized jointly over all measures, and do not depend on the convex function $\xi^*$ entering the definition of the cost function. Denoting, for every $\mu \in \mcl P(\R_+)$ and $r \in [0,1]$,
\begin{equation}
\label{e.def.F-1}
F^{-1}_\mu(r) := \inf \Ll\{ s \ge 0 \ : \ \mu \Ll( [0,s] \Rr)  > r \Rr\} ,
\end{equation}
and letting $U$ be a uniform random variable under $\P$, we set
\begin{equation}
\label{e.def.Xmu}
X_\mu := F^{-1}_\mu(U).
\end{equation}
It is classical to verify that the law of $X_\mu$ under $\P$ is $\mu$, and that for any two measures $\mu,\nu \in \mcl P_*(\R_+)$, the law of the pair $(X_\mu, X_\nu)$ is an optimal transport plan for the cost function $(x,y) \mapsto \xi^*(x-y)$ (see e.g.\ \cite[Theorem~2.18 and Remark~2.19(ii)]{villani} or \cite[Theorem~6.0.2]{AGS}). As discussed above, for the purposes of this paper we define the solution of \eqref{e.hj} to be given by the Hopf-Lax formula
\begin{equation}  
\label{e.hopflax}
f(t,\mu) := \sup_{\nu \in \mcl P_*(\R_+)} \Ll( \psi(\nu) - t\, \E \Ll[ \xi^* \Ll( \frac{ X_{\nu} - X_\mu }{t} \Rr)  \Rr] \Rr). 
\end{equation}
Although this will not be used here, one can give a brief and nonrigorous idea of the definition of the derivative $\dr_\mu$ formally appearing in \eqref{e.hj} in the case when it applies to a sufficiently ``smooth'' function $g : \mcl P_*(\R_+) \to \R$: for each $\mu \in \mcl P_*(\R_+)$, we want $\dr_\mu g(\mu,\cdot)$ to satisfy $\int \xi(\dr_\mu g(\mu,\cdot)) \, \d \mu < \infty$, and be such that, as $\nu \to \mu$ in $\mcl P_*(\R_+)$,
\begin{equation}  
\label{e.taylor.measure}
g(\nu) = g(\mu) + \E \Ll[ \dr_\mu g(\mu,X_\mu)(X_\nu - X_\mu) \Rr] + o \Ll( \Ll\| X_\nu - X_\mu \Rr\|_{L^*}  \Rr) ,
\end{equation}
where $\|Y\|_{L^*}$ denotes the $\xi^*$-Orlicz norm of a random variable $Y$, see \cite{RR}, 
\begin{equation*}  
\|Y\|_{L^*} := \inf \Ll\{ t > 0 \ : \ \E \Ll[ \xi^*(t^{-1} Y) \Rr]  \le \xi^*(1) \Rr\} .
\end{equation*}
From this informal definition, one can work out finite-dimensional approximations of the equation \eqref{e.hj} by imposing, for instance, that only measures of the form $k^{-1} \sum_{\ell = 1}^k \de_{x_\ell}$ are ``permitted''. This brings us within the realm of finite-dimensional Hamilton-Jacobi equations and allows for instance to verify the correspondence between the equation \eqref{e.hj} and the Hopf-Lax formula \eqref{e.hopflax} at the level of these approximations.

\smallskip

We will in fact consider a richer family of finite-volume free energies than what appears on the left side of \eqref{e.main}, parametrized by $(t,\mu) \in \R_+ \times \mcl P_*(\R_+)$, and I expect that these free energies converge to $f(t,\mu)$ as $N$ tends to infinity, where $f$ is the solution of \eqref{e.hj}. In fact, I expect that a similar result holds for a much larger class of measures $P_N$ than those covered by the assumption of \eqref{e.ass.sphere}. A precise conjecture to this effect is presented in Section~\ref{s.conj}. The identification of the initial condition $\psi$ appearing in \eqref{e.hj} is then discussed in Section~\ref{s.conv.init}. The proof of Theorem~\ref{t.main} is given in Section~\ref{s.proof}. Finally, finite-dimensional approximations of \eqref{e.hj} are briefly explored in Section~\ref{s.finite.dim}.

%
%
%
%
%
%

\section{Conjecture for a general reference measure}
\label{s.conj}

The main goal of this section is to state a conjecture generalizing Theorem~\ref{t.main} to a wider class of measures $P_N$ than those appearing in \eqref{e.ass.sphere}. For simplicity, we retain the assumption that 
\begin{equation}
\label{e.ass.support}
\mbox{the measure $P_N$ is supported in the ball $\{\si \in \R^N \ : \ |\sigma|^2 \le N\}$.}
\end{equation}
If there exists some $R \in (0,\infty)$ such that for every $N$, the maesure $P_N$ is supported in the ball $\{\si \in \R^N \ : \ |\sigma|^2 \le R N\}$, then one can without loss of generality reduce to the case in \eqref{e.ass.support} by rescaling the function $\xi$.

\smallskip

In order to grow some familiarity with Theorem~\ref{t.main} and its conjectured generalization, we start by illustrating the driving idea in simpler settings. Possibly the simplest demonstration of the idea of identifying limit free energies of mean-field systems as solutions of Hamilton-Jacobi equations concerns the analysis of the Curie-Weiss model, see e.g.\ \cite[Section~1]{HJinfer} (earlier references include \cite{new86,bra83}). We give here another simple illustration for spin glasses in the high-temperature regime, which is similar to discussions in \cite{gue01}. For every $t, h \ge 0$, we consider the ``enriched'' free energy
\begin{multline}  
\label{e.def.FNcirc}
\bar F_N^\circ(t,h) 
\\
:= -\frac 1 N\E \log \int \exp \Ll( \sqrt{2t} H_N(\sigma) - Nt \xi \Ll( N^{-1} |\sigma|^2 \Rr) + \sqrt{2h} z \cdot \sigma - h|\sigma|^2\Rr) \, \d P_N(\sigma),
\end{multline}
where $z = (z_1,\ldots,z_N)$ is a vector of independent standard Gaussians, independent of~$H_N$, and where $|\sigma|^2 = \sum_{i = 1}^N \sigma_i^2$.
Notice that under the assumptions of Theorem~\ref{t.main}, this quantity equals $N$, and $\xi(N^{-1} |\sigma|^2) = 1$. The terms $-Nt \xi \Ll( N^{-1} |\sigma|^2 \Rr)$ and $-h|\sigma|^2$ inside the exponential in \eqref{e.def.FNcirc} are natural since they ensure that
\begin{equation*}  
\E \Ll[ \exp \Ll( \sqrt{2t} H_N(\sigma) - Nt \xi \Ll( N^{-1} |\sigma|^2 \Rr)  + \sqrt{2h} z \cdot \sigma - h|\sigma|^2\Rr)  \Rr] = 1.
\end{equation*}
(Observing that $H_N(\sigma)$ and $z\cdot \sigma$ are independent centered Gaussians of variance $N \xi \Ll( N^{-1} |\sigma|^2 \Rr)$ and $|\sigma|^2$ respectively, this follows either by recognizing an exponential martingale, or by differentiating in~$t$ and $h$ and using Gaussian integration by parts.) In statistical physics' terminology, one may say that we have normalized the Hamiltonian so that the annealed free energy is always zero. The minus sign in front of the expression on the right side of \eqref{e.def.FNcirc} is also convenient since, by Jensen's inequality, we thus have $\bar F_N^\circ \ge 0$. One can check that
\begin{equation}  
\label{e.hj.RS}
\dr_t \bar F_N^\circ - \xi \Ll( \dr_h \bar F_N^\circ \Rr) = \E \la \xi \Ll( \frac{\sigma \cdot \sigma'}{N} \Rr) \ra - \xi \Ll( \E \la  \frac{\sigma \cdot \sigma'}{N} \ra \Rr) .
\end{equation}
In the case when $\xi$ is convex over $\R$, the right side of \eqref{e.hj.RS} is non-negative, and thus we already see that $\bar F_N^\circ$ is a supersolution of a simple Hamilton-Jacobi equation. Moreover, one can expect in many settings that the initial condition $\bar F_N^\circ(0,h)$ converges as $N$ tends to infinity; for instance, when $P_N$ is the $N$-fold product measure $P_N = P_1^{\otimes N}$, we have 
\begin{equation*}  
\bar F_N^\circ(0,h) = \bar F_1^\circ(0,h) = -\E \log \int_\R \exp \Ll( \sqrt{2h} \, z_1  \sigma  - h \sigma^2 \Rr)  \, \d P_1(\sigma),
\end{equation*}
where in this expression, the variable $z_1$ is a scalar standard Gaussian.
Finally, if we expect the overlap $\sigma \cdot \sigma'$ to be concentrated around its expectation, which should be correct in a high-temperature region (that is, for $t$ sufficiently small), then it should be that $\bar F_N^\circ$ converges to the solution of the equation $\dr_t f - \xi(\dr_h f) = 0$. 

\smallskip

However, as is well-known, the overlap $\sigma \cdot \sigma'$ is in fact not always concentrated around its mean value, and a more refined approach is necessary. In order to proceed, as in \cite{gue03,Tpaper,Tbook1, Tbook2, pan}, we need to compare the system of interest with a much more refined ``linear'' system than $\sqrt{2h} z \cdot \sigma$. We parametrize the more refined systems by a measure $\mu \in \mcl P(\R_+)$ (and not $\mu \in \mcl P([0,1])$ as experts may expect).
It is much more convenient to describe this more refined system in the case when $\mu$ is a measure of finite support: we assume that for some integer $k \ge 0$, there exist 
\begin{equation}  
\label{e.def.zeta.q}
0 = \zeta_{0} < \zeta_1 < \cdots < \zeta_{k} < \zeta_{k+1} = 1, \qquad 0 = q_{-1} \le q_0 < q_1 < \cdots < q_k < q_{k+1} =  \infty
\end{equation}
such that
\begin{equation}  
\label{e.mu.diracs}
\mu = \sum_{\ell = 0}^{k} (\zeta_{\ell+1} - \zeta_{\ell}) \de_{q_\ell}.
\end{equation}
We represent the rooted tree with (countably) infinite degree and depth $k$ by
\begin{equation*}  
\mcl A := \N^{0} \cup \N \cup \N^2 \cup \cdots \cup \N^k,
\end{equation*}
where $\N^{0} = \{\emptyset\}$, and $\emptyset$ represents the root of the tree. For every $\alpha \in \N^\ell$, we write $|\alpha| := \ell$ to denote the depth of the vertex $\alpha$ in the tree $\mcl A$. For every leaf $\alpha = (n_1,\ldots,n_k)\in \N^k$ and $\ell \in \{0,\ldots, k\}$, we write
\begin{equation*}  
\alpha_{| \ell} := (n_1,\ldots, n_\ell),
\end{equation*}
with the understanding that $\alpha_{| 0} = \emptyset$.
We also give ourselves a family $(z_{\al,i})_{\alpha \in \mcl A, 1 \le i \le N}$ of independent standard Gaussians, independent of $H_N$, and we let $(v_\alpha)_{\al \in \N^k}$ denote a Poisson-Dirichlet cascade with weights given by the family $(\zeta_\ell)_{1 \le \ell \le k}$. We refer to \cite[(2.46)]{pan} for a precise definition, and briefly mention here the following three points. First, in the case $k = 0$, we simply set $v_{\emptyset} = 1$. Second, in the case $k = 1$, the weights $(v_\alpha)_{\al \in \N}$ are obtained by normalizing a Poisson point process on $(0,\infty)$ with intensity measure~$\zeta_1 x^{-1-\zeta_1} \, \d x$ so that $\sum_{\alpha} v_\alpha = 1$. Third, for general~$k \ge 1$, the progeny of each non-leaf vertex at level $\ell \in \{0,\ldots, k-1\}$ is decorated with the values of an independent Poisson point process of intensity measure $\zeta_{\ell+1} x^{-1-\zeta_{\ell+1}} \, \d x$, then the weight of a given leaf $\alpha \in \N^k$ is calculated by taking the product of the ``decorations'' attached to each parent vertex, including the leaf vertex itself (but excluding the root), and finally, these weights over leaves are normalized so that their total sum is~$1$. We take this Poisson-Dirichlet cascade $(v_\alpha)_{\alpha \in \N^k}$ to be independent of $H_N$ and of the random variables $(z_\alpha)_{\al \in \mcl A}$. For every $\sigma \in \R^N$ and $\al \in \N^k$, we set
\begin{equation}
\label{e.def.HNalpha}
H_N'(\sigma,\alpha) := \sum_{\ell = 0}^k \Ll(2q_{\ell} - 2q_{\ell-1}\Rr)^\frac 1 2 z_{\alpha_{|\ell}} \cdot \sigma,
\end{equation}
where we write $z_{\al_{|\ell}} \cdot \sigma = \sum_{i = 1}^N z_{\al_{|\ell},i} \, \sigma_i$. The random variables $(H_N'(\sigma,\alpha))_{\sigma \in {\R^N}, \alpha \in \N^k}$ form a Gaussian family which is independent of $(H_N(\sigma))_{\sigma \in \R^N}$ and has covariance
\begin{equation*}  
\E \Ll[ H_N'(\sigma,\alpha) \, H_N'(\tau,\beta) \Rr] = 2q_{\alpha \wedge \beta} \  \sigma \cdot \tau \qquad (\sigma,\tau \in \R^N, \ \alpha, \beta \in \N^k),
\end{equation*}
where we write, for every $\alpha, \beta \in \N^k$,
\begin{equation*}  
\alpha \wedge \beta := \sup \{\ell \le k \ : \ \al_{|\ell} = \be_{|\ell} \}.
\end{equation*}
We define the ``enriched'' free energy as
\begin{multline}
\label{e.def.FN}
F_N(t,\mu) := -\frac 1 N \log \int \sum_{\al \in \N^k}  \exp \Big( \sqrt{2t} H_N(\sigma) - N t\xi \Ll( N^{-1} |\si|^2 \Rr)  
\\
+ H_N'(\sigma,\alpha) - q_k|\sigma|^2  \Big) \,v_\al \,  \d P_N(\sigma),
\end{multline}
and $\bar F_N(t,\mu) := \E \Ll[ F_N(t,\mu) \Rr]$.
As for \eqref{e.def.FNcirc}, we have normalized this expression so that, by Jensen's inequality, we have $\bar F_N \ge 0$. We first notice that this quantity can be extended to all $\mu \in \mcl P_1(\R_+)$ by continuity.
\begin{proposition}[Continuity and extension of $\bar F_N(t,\mu)$]
\label{p.continuity}
Assume \eqref{e.ass.support}. 
For each $t \ge 0$ and $\mu, \mu' \in \mcl P(\R_+)$ with finite support, we have
\begin{equation}  
\label{e.continuity}
\Ll| \bar F_N(t,\mu') - \bar F_N(t,\mu) \Rr| \le \E \Ll[ |X_{\mu'} - X_{\mu}| \Rr] .
\end{equation}
In particular, the mapping $\mu \mapsto \bar F_N(t,\mu)$ can be extended by continuity to the set
\begin{equation*}  
\mcl P_1(\R_+) := \Ll\{ \mu \in \mcl P(\R_+) \ : \ \int s \, \d \mu(s) < \infty \Rr\} .
\end{equation*}

\end{proposition}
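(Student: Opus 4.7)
The plan is to prove \eqref{e.continuity} for finitely-supported $\mu,\mu'$ by a Guerra-type Gaussian interpolation, then extend by density. First, by a common refinement (possibly allowing degeneracies $q_\ell = q_{\ell-1}$ in one of the two measures, handled by a limit argument), I may assume that both measures have the form $\mu = \sum_{\ell=0}^{k}(\zeta_{\ell+1}-\zeta_\ell)\delta_{q_\ell}$ and $\mu' = \sum_{\ell=0}^{k}(\zeta_{\ell+1}-\zeta_\ell)\delta_{q_\ell'}$ with the same $\zeta$-sequence. Under this reduction the optimal coupling from \eqref{e.def.Xmu} gives $\E|X_{\mu'}-X_\mu| = \sum_\ell (\zeta_{\ell+1}-\zeta_\ell)|q_\ell'-q_\ell|$, so it suffices to bound the left side of \eqref{e.continuity} by this quantity.

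For the interpolation, I would introduce an independent copy $\hat H_N'$ of $H_N'$ but built with the parameters of $\mu'$, so that its covariance is $2q'_{\alpha\wedge\beta}\sigma\cdot\tau$, set $H_N'^{(s)} := \sqrt{1-s}\,H_N' + \sqrt{s}\,\hat H_N'$ for $s\in[0,1]$, and replace the counterterm $q_k|\sigma|^2$ by $q_k(s)|\sigma|^2$ with $q_\ell(s) := (1-s)q_\ell + sq_\ell'$. This defines an $s$-interpolated free energy $\bar F_N^{(s)}$ equal to $\bar F_N(t,\mu)$ at $s=0$ and to $\bar F_N(t,\mu')$ at $s=1$. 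A direct computation gives $\E[\partial_s H_N'^{(s)}(\sigma,\alpha) H_N'^{(s)}(\tau,\beta)] = (q'_{\alpha\wedge\beta} - q_{\alpha\wedge\beta})\sigma\cdot\tau$, so that Gaussian integration by parts produces a diagonal self-energy term which exactly cancels the derivative of the counterterm, leaving
\begin{equation*}
\frac{\d}{\d s}\bar F_N^{(s)} = \frac{1}{N}\,\E\la\la (q'_{\alpha\wedge\beta}-q_{\alpha\wedge\beta})\,\sigma\cdot\tau\ra\ra_s,
\end{equation*}
where $\la\la\cdot\ra\ra_s$ is the two-replica Gibbs average at interpolation parameter $s$. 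Using \eqref{e.ass.support} to bound $|\sigma\cdot\tau|\leq N$ and then applying the triangle inequality and integrating in $s$ reduces the task to
\begin{equation*}
\int_0^1 \sum_{\ell=0}^k |q_\ell'-q_\ell|\,\E\la\la \1_{\alpha\wedge\beta=\ell}\ra\ra_s\, \d s \;\leq\; \sum_{\ell=0}^k(\zeta_{\ell+1}-\zeta_\ell)|q_\ell'-q_\ell|.
\end{equation*}

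The main obstacle, in my view, is to establish the level-by-level identity $\E\la\la \1_{\alpha\wedge\beta=\ell}\ra\ra_s = \zeta_{\ell+1}-\zeta_\ell$, which converts the displayed inequality into an equality matching $\E|X_{\mu'}-X_\mu|$. This is a manifestation of the Ruelle/Bolthausen--Sznitman invariance of the Poisson--Dirichlet cascade under iid multiplicative tilting. Although the effective tilt on the cascade, $\omega_\alpha = \int\exp(\cdots + H_N'^{(s)}(\sigma,\alpha) - q_k(s)|\sigma|^2)\,\d P_N(\sigma)$, is not itself of the simple form ``exponential of a tree-indexed Gaussian'', its dependence on the leaf variable $z_{\alpha_{|k}}$ is iid across $\alpha\in\N^k$ conditionally on all coarser-level randomness. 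The BS invariance therefore preserves the cascade structure when level $k$ is integrated out, and iterating this from level $k$ down to level $1$ yields the claimed overlap distribution.

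The extension to $\mcl P_1(\R_+)$ is then routine: the finitely-supported probability measures on $\R_+$ are dense in $\mcl P_1(\R_+)$ for the distance $(\mu,\nu)\mapsto\E|X_\mu-X_\nu|$, and the $1$-Lipschitz bound \eqref{e.continuity} yields a unique continuous, still $1$-Lipschitz, extension of $\bar F_N(t,\cdot)$ to $\mcl P_1(\R_+)$.
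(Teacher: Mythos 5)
Your proposal is correct and follows essentially the same route as the paper: both reduce $\mu,\mu'$ to a common $\zeta$-grid, differentiate the free energy along the straight-line path in the $q$-parameters (the paper one $q_\ell$ at a time via $\dr_{q_\ell}\bar F_N$, you all at once via a Guerra interpolation with a fresh independent copy $\hat H_N'$, which is chain-rule equivalent), rewrite the resulting derivative as a replica-overlap expression by Gaussian integration by parts with the diagonal term cancelled by the counterterm, and invoke the Poisson--Dirichlet overlap identity of Lemma~\ref{l.overlap.pd} together with $|\sigma\cdot\sigma'|\le N$ to bound each level contribution by $(\zeta_{\ell+1}-\zeta_\ell)|q'_\ell-q_\ell|$. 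The only cosmetic differences are your fresh-copy interpolation in place of direct differentiation, and your proposed limit argument for degenerate grids where the paper instead extends the cascade definition (Step~1 of its proof) to allow equal consecutive $\zeta_\ell$ or $q_\ell$.
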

%

The proof of proposition~\ref{p.continuity} makes use of the following two lemmas. The first one provides an explicit procedure for integrating the randomness coming from the Poisson-Dirichlet cascade. We refer to \cite[Theorem~2.9]{pan} for a proof. (Notice that the indexation of the family $\zeta$ differs by one unit between here and \cite{pan}.)
\begin{lemma}[Integration of Poisson-Dirichlet cascades]
Assume \eqref{e.ass.support}, and fix $t \ge 0$. For every $y_0, \ldots,$ $y_k \in \R^N$, define
\label{l.integr.PD}
\begin{multline}  
\label{e.def.Xk}
X_k(y_0,\ldots,y_k) := 
\\
\log \int \exp \Ll( \sqrt{2t} H_N(\sigma) - N t\xi \Ll( N^{-1} |\si|^2 \Rr)   + \sum_{\ell = 0}^k \Ll(2q_{\ell} - 2q_{\ell-1}\Rr)^\frac 1 2 y_\ell \cdot \sigma - q_k |\sigma|^2  \Rr) \, \d P_N(\sigma),
\end{multline}
and then recursively, for every $\ell \in \{1,\ldots, k\}$,
\begin{equation}  
\label{e.def.Xell}
X_{\ell-1}(y_0,\ldots,y_{\ell-1}) := \zeta_{\ell}^{-1} \log \E_{y_{\ell}} \exp \Ll( \zeta_{\ell} X_{\ell}(y_0,\ldots,y_{\ell}) \Rr) ,
\end{equation}
where, for every $\ell \in \{0,\ldots,k\}$, we write $\E_{y_{\ell}}$ to denote the integration of the variable $y_{\ell} \in \R^N$ along the standard Gaussian measure.
We have
\begin{equation*}  
-N \, \E \Ll[ F_N(t,\mu)  \vb   (H_N(\sigma))_{\sigma \in \{\pm 1\}^N} \Rr] = \E_{y_0}\Ll[X_0(y_0)\Rr].
\end{equation*}
\end{lemma}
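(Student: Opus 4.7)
The plan is to specialize the general Ruelle probability cascade integration machinery (essentially Theorem~2.9 of \cite{pan}) to our specific Hamiltonian.

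First, condition on the Gaussian process $(H_N(\sigma))_\sigma$ and absorb the $H_N$-dependent terms into
$$G(\sigma) := \sqrt{2t}\, H_N(\sigma) - Nt\, \xi\Ll(N^{-1}|\sigma|^2\Rr) - q_k |\sigma|^2,$$
which is a deterministic function of $\sigma$ given $H_N$. The quantity to compute becomes
$$-N\, \E\Ll[F_N(t,\mu) \vb H_N\Rr] = \E_{z,v}\, \log \int \sum_{\al \in \N^k} v_\al \exp\Ll( G(\sigma) + H_N'(\sigma,\al) \Rr) \d P_N(\sigma),$$
where the remaining randomness is in the cascade weights $(v_\al)$ and the Gaussians $(z_\be)_{\be \in \mcl A}$. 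The essential structural feature is that $H_N'(\sigma,\al)$ decomposes additively along the path $(\al_{|0}, \ldots, \al_{|k})$ from root to leaf, with the contribution at depth $\ell$ involving the single Gaussian vector $z_{\al_{|\ell}}$, independent across $\ell$ and across subtrees.

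The core single-layer identity to invoke is the following: if $(p_n)$ is the normalization of a Poisson point process on $(0,\infty)$ with intensity $\zeta x^{-1-\zeta}\, \d x$, and $(g_n)$ are i.i.d.\ and independent of $(p_n)$ with $\E[e^{\zeta g_1}] < \infty$, then
$$\E \log \sum_n p_n\, e^{g_n} = \zeta^{-1} \log \E\Ll[ e^{\zeta g_1} \Rr].$$
This is a standard consequence of the Laplace functional of Poisson point processes and of the stable subordinator representation of the PD distribution. It says that integrating out a PD layer with parameter $\zeta$ transforms the layer's log-exponential into the operation $g \mapsto \zeta^{-1} \log \E e^{\zeta g}$, which is precisely the shape of the recursion $X_{\ell-1} = \zeta_\ell^{-1} \log \E_{y_\ell} \exp(\zeta_\ell X_\ell)$ in \eqref{e.def.Xell}.

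The proof then proceeds by applying this identity recursively from the leaves (depth $k$) up to the root. At step $\ell \in \{k, k-1, \ldots, 1\}$, conditionally on the cascade weights and Gaussians above level $\ell$, the PD weights at level $\ell$ in each subtree form independent copies of a PD$(\zeta_\ell)$ distribution and the Gaussians $(z_\be)_{|\be| = \ell}$ are i.i.d.\ standard Gaussians on $\R^N$, independent of everything else. The function being exponentiated at this stage is, by the previous iterations, precisely $X_\ell(y_0, \ldots, y_\ell)$ with $y_j$ identified with $z_{\al_{|j}}$. Applying the single-layer identity simultaneously to each subtree at level $\ell$ collapses $X_\ell$ into $X_{\ell-1}$. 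After $k$ iterations only the root-level Gaussian $y_0$ remains, and a final expectation produces $\E_{y_0}[X_0(y_0)]$. The main technical obstacle is the careful bookkeeping required to iterate the single-layer identity inside the hierarchy---one must verify at each stage that the quantity being integrated is conditionally i.i.d.\ across the appropriate subtrees, and that the integrability hypothesis $\E[e^{\zeta_\ell X_\ell}] < \infty$ holds. The latter is straightforward under \eqref{e.ass.support}, which bounds $|\sigma|$ uniformly and hence controls $X_\ell$ uniformly in its arguments.
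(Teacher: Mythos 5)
Your proposal sketches essentially the same argument as the reference the paper cites without reproducing (\cite[Theorem~2.9]{pan}): condition on $H_N$, exploit the layer-wise conditionally i.i.d.\ structure of the Ruelle probability cascade together with the additivity of $H_N'$ along root-to-leaf paths, and iterate the single-layer PD identity $\E\log\sum_n p_n e^{g_n}=\zeta^{-1}\log\E[e^{\zeta g_1}]$ from depth $k$ up to the root, ending with a plain expectation over $y_0$ (the one structural adjustment to Panchenko's version, where no root-level Gaussian is present). This is the same approach as the paper; the sketch is correct, including the integrability check via \eqref{e.ass.support}, which gives at-most-linear growth of $X_\ell$ in the Gaussian variables.
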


In order to state the second lemma, we introduce notation for the Gibbs measure associated with the free energy $F_N$. That is, for every bounded measurable function $f : \R^N \times \N^k \to \R$, we write
\begin{multline}  
\label{e.def.gibbs}
\la f(\sigma,\alpha) \ra_{t,\mu} := \exp \Ll( N F_N(t,\mu) \Rr) 
\\
\times \int \sum_{\al \in \N^k} f(\sigma,\alpha) \exp \Ll( \sqrt{2t} H_N(\sigma) - N t\xi \Ll( N^{-1} |\si|^2 \Rr)  
+ H_N'(\sigma,\alpha) - q_k|\sigma|^2  \Rr) \, v_\al \, \d P_N(\sigma).
\end{multline}
We usually simply write $\la \cdot \ra$ instead of $\la \cdot \ra_{t,\mu}$ unless there is a risk of confusion. Notice that the measure $\la \cdot \ra$ depends additionally on the realization of the Gaussian field $(H_N(\sigma))$ and of the variables $(z_{\al})$. By the definition of $F_N(t,\mu)$, we have $\la 1 \ra = 1$, and thus~$\la \cdot \ra$ can be interpreted as a probability distribution on $\R^N \times \N^k$. We also need to consider ``repliced'' pairs, denoted by $(\sigma,\alpha), (\sigma',\alpha'), (\sigma'', \al''), \ldots$, which are independent and are each distributed according to $\la \cdot \ra$ (conditionally on $(H_N(\si))$ and~$(z_\al)$). We keep writing $\la \cdot \ra$ to denote the tensorized measure, so that for instance, for every bounded measurable $f,g : \R^N \times \N^k \to \R$, we have
\begin{equation*}  
\la f(\sigma,\alpha) \, g(\sigma',\alpha') \ra = \la f(\sigma,\alpha) \ra \, \la g(\sigma',\alpha') \ra .
\end{equation*}
The second lemma we need identifies the law of the overlap between $\al$ and $\al'$ under the Gibbs measure, after also averaging over the randomness coming from $(H_N(\sigma))$ and $(z_\al)$ (averaging over $(z_\al)$ only would be sufficient).
\begin{lemma}[overlaps for the Poisson-Dirichlet variables]
\label{l.overlap.pd}
Whenever the measure $\mu$ is of the form in \eqref{e.def.zeta.q}-\eqref{e.mu.diracs}, we have, for every $t \ge 0$ and $\ell \in \{0,\ldots,k\}$,
\begin{equation*}  
\E \la \1_{\{\al \wedge \al'=\ell\}} \ra_{t,\mu} = (\zeta_{\ell+1} - \zeta_{\ell}).
\end{equation*}
\end{lemma}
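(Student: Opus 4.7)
The plan is to first integrate out the spin variable $\sigma$ so that the Gibbs measure on the leaves of $\mcl A$ becomes a reweighted Poisson–Dirichlet cascade, and then to invoke the invariance property of such cascades.

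First I would set, for each leaf $\alpha \in \N^k$,
\begin{equation*}
W_\alpha := \int \exp\Bigl(\sqrt{2t}\,H_N(\sigma) - Nt\,\xi\bigl(N^{-1}|\sigma|^2\bigr) + H_N'(\sigma,\alpha) - q_k |\sigma|^2\Bigr)\,\d P_N(\sigma),
\end{equation*}
so that the $\alpha$-marginal of $\la\cdot\ra_{t,\mu}$ is exactly the reweighted measure $\tilde v_\alpha := v_\alpha W_\alpha / \sum_{\beta} v_\beta W_\beta$. The quantity to compute becomes
\begin{equation*}
\E\la \1_{\{\alpha\wedge\alpha'=\ell\}}\ra_{t,\mu} = \E\Bigl[\sum_{\alpha\wedge\alpha' = \ell} \tilde v_\alpha \tilde v_{\alpha'}\Bigr].
\end{equation*}

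Next I would check that $(W_\alpha)$ has precisely the tree-coupling structure required by the Ruelle cascade invariance. Using the decomposition $H_N'(\sigma,\alpha) = \sum_{\ell=0}^k (2q_\ell - 2q_{\ell-1})^{1/2} z_{\alpha_{|\ell}}\cdot\sigma$ together with the independence of the family $(z_\beta)_{\beta\in\mcl A}$, two leaves $\alpha,\alpha'$ share the variables $z_{\alpha_{|0}},\ldots,z_{\alpha_{|j}}$ when $\alpha\wedge\alpha'=j$ and are otherwise driven by independent increments. The field $H_N$ enters $W_\alpha$ in an $\alpha$-independent manner and can be frozen by conditioning, so that after this conditioning the family $(W_\alpha)$ is of the form to which the invariance of Ruelle cascades (see e.g.\ the relevant theorems in \cite{pan}) applies.

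Applying that invariance, the reweighted cascade $(\tilde v_\alpha)$ has, after a suitable random relabelling of the indices at each level, the same joint distribution as the original cascade $(v_\alpha)$; in particular, the law of the overlap $\alpha\wedge\alpha'$ under $\E\la\cdot\ra$ agrees with its law under the unperturbed cascade. For the unperturbed Poisson–Dirichlet cascade, the identity $\E\bigl[\sum_{\alpha\wedge\alpha'=\ell} v_\alpha v_{\alpha'}\bigr] = \zeta_{\ell+1}-\zeta_\ell$ is a direct consequence of the construction from Poisson point processes of intensities $\zeta_{\ell+1} x^{-1-\zeta_{\ell+1}}\,\d x$; combining these two facts yields the claim.

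The main obstacle is really just the careful invocation of the Bolthausen–Sznitman/Ruelle invariance: one must verify that the $\sigma$-integral $W_\alpha$, although coupled globally through $H_N$, respects the cascade independence once $H_N$ is conditioned upon, and that the required integrability (ensured by the support assumption \eqref{e.ass.support}) holds. Once this is checked, the statement is an immediate corollary of the invariance and the elementary overlap computation for a pure Poisson–Dirichlet cascade, so no new calculation specific to the spin-glass Hamiltonian is needed.
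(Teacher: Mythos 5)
Your approach is correct, but it is a genuinely different route from the paper's. The paper works directly at the level of the exponentially reweighted cascade: it fixes a level $\ell$, introduces an auxiliary Gaussian $g_{\alpha_{|\ell}}$ at that level, and applies the Bolthausen--Sznitman invariance to the specific tilting $w_\alpha \mapsto w_\alpha \exp(s g_{\alpha_{|\ell}})$, using the identity $\E\la g_{\alpha_{|\ell}}\ra_{t,\mu,\ell,0}=\E\la g_{\alpha_{|\ell}}-s\zeta_\ell\ra_{t,\mu,\ell,s}$ and Gaussian integration by parts to produce the overlap indicator and solve for its averaged Gibbs weight. That argument needs no prior knowledge of the overlap distribution of the unperturbed cascade: the value $1-\zeta_\ell$ comes out of the calculation. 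Your argument instead integrates out $\sigma$ first to obtain the reweighted cascade $\tilde v_\alpha = v_\alpha W_\alpha/\sum_\beta v_\beta W_\beta$, observes that (conditionally on $H_N$) the family $(W_\alpha)$ is tree-structured, invokes the full distributional quasi-stationarity of the Ruelle cascade under such tree-structured reweighting (a stronger packaging of the same Bolthausen--Sznitman mechanism, cf.\ Theorem~4.4 of \cite{pan}), and then quotes the overlap formula $\E[\sum_{\alpha\wedge\alpha'=\ell} v_\alpha v_{\alpha'}]=\zeta_{\ell+1}-\zeta_\ell$ for the unperturbed cascade as a separate input. Both are standard and both work. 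The trade-off: the paper's route is more self-contained and produces the numerical value as output, while yours is conceptually more modular but relies on two independent classical facts (the quasi-stationarity in its strong form, and the unperturbed overlap law) rather than deriving the answer ab initio. Your remark about conditioning on $H_N$, the tree structure of $W_\alpha$ via the $z_{\alpha_{|\ell}}$'s, and the integrability supplied by \eqref{e.ass.support} is exactly the right thing to check and closes the gap.
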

\begin{proof}
The argument can be extracted from \cite{Tbook2}, or by observing that the derivation of \cite[(2.82)]{pan} applies as well to the measures considered here. A slightly adapted version of the latter argument is as follows. We fix $\ell \in \{0,\ldots, k\}$, and let $(g_\beta)_{\beta \in \N^\ell}$ be a family of independent standard Gaussians, independent of any other random variable considered so far. For every $\al, \be \in \N^k$, we have
\begin{equation}  
\label{e.correl}
\E \Ll[ g_{\al|\ell} \, g_{\be|\ell} \Rr] = \1_{\{ \al_{|\ell} = \be_{|\ell} \}}.
\end{equation}
Recall the construction of the Poisson-Dirichlet cascade outlined in the paragraph above \eqref{e.def.HNalpha}, see also \cite[(2.46)]{pan}, and denote by $w_\al$ the weights attributed to the leaves by taking the product of the ``decorations'' of the parent vertices, before normalization, as in \cite[(2.45)]{pan}, so that 
\begin{equation*}  
v_\alpha = \frac{w_\al}{\sum_{\beta \in \N^k} w_{\be}}.
\end{equation*}
By \cite[(2.26)]{pan}, for every $s \in \R$, we have that
\begin{equation*}  
(w_\alpha, g_{\al_{|\ell}})_{\al \in \N^k} \quad \text{ and } \quad \Ll(w_\al \exp \Ll( s \Ll(g_{\al_{|\ell}} - \tfrac {s\zeta_\ell}{2} \Rr) \Rr),  g_{\al_{|\ell}} - s\zeta_\ell\Rr)_{\al \in \N^k}
\end{equation*}
have the same law up to reorderings that preserve the tree structure: that is, we identify two families $(a_\al)_{\al \in \N^k}$ and $(b_\al)_{\al \in \N^k}$ whenever there exists a bijection $\pi : \N^k \to \N^k$ satisfying, for every $\al, \be \in \N^k$,
\begin{equation*}  
a_{\al} = b_{\pi(\al)} \quad \text{and} \quad \pi(\al) \wedge \pi(\be) = \al \wedge \be.
\end{equation*}
We denote 
\begin{equation*}  
v_{\alpha,\ell,s} := \frac{w_\al\exp \Ll( s g_{\al_{|\ell}} \Rr)}{\sum_{\beta \in \N^k} w_{\be}\exp \Ll( s g_{\be_{|\ell}} \Rr)},
\end{equation*}
and write $\la \cdot \ra_{t,\mu,\ell,s}$ to denote the measure defined as in \eqref{e.def.gibbs} but with $v_\al$ replaced by~$v_{\al,\ell,s}$. By the invariance described above, Gaussian integration by parts, and~\eqref{e.correl}, we have, for every $s \in \R$, 
\begin{equation*}  
0 = \E \la g_{\al_{|\ell}} \ra_{t,\mu,\ell,0} = \E \la g_{\al_{|\ell}} - s\zeta_\ell \ra_{t,\mu,\ell,s}  = s \, \E \la 1-\1_{\{\al_{|\ell} = \al'_{|\ell}\}} - \zeta_\ell \ra_{t,\mu,\ell,s},
\end{equation*}
and, using the invariance once more, we can replace $\la \cdot \ra_{t,\mu,\ell,s}$ by $\la \cdot \ra_{t,\mu}$ in the last expression. We thus conclude that
\begin{equation*}  
\E \la \1_{\{\al_{|\ell} = \al'_{|\ell}\}} \ra_{t,\mu} = 1-\zeta_\ell,
\end{equation*}
which yields the desired result.
\end{proof}

\begin{proof}[Proof of Proposition~\ref{p.continuity}] 
We decompose the proof into two steps.

\smallskip

\emph{Step 1.} In this step, we give a consistent extension of the definition of $\bar F_N(t,\mu)$ to the case when the parameters in \eqref{e.def.zeta.q} may contain repetitions. More precisely, we give ourselves possibly repeating parameters
\begin{equation}  
\label{e.rep.zeta.q}
0 = \zeta_{0} \le \zeta_1 \le  \cdots \le \zeta_{k} \le \zeta_{k+1} = 1, \qquad 0 = q_{-1} \le q_0 \le \cdots \le q_k < q_{k+1} =  \infty,
\end{equation}
and let $\mu$ be the measure defined by \eqref{e.mu.diracs}. We show that the naive extension of the definition of $\bar F_N(t,\mu)$ obtained by simply ignoring the fact that there may be repetitions in the parameters in \eqref{e.rep.zeta.q} yields the same result as the actual definition that was given using non-repeating parameters. The first thing we need to do is extend the definition of the Poisson-Dirichlet cascade $(v_\alpha)_{\alpha \in \N^k}$ to the case when some values of $(\zeta_\ell)_{\ell \in \{1,\ldots,k\}}$ may be equal to $0$ or to $1$. Recall that for $\zeta_\ell \in (0,1)$, the definition briefly described in the paragraph above \eqref{e.def.HNalpha} involves a Poisson point process of intensity measure $\zeta_{\ell} x^{-1-\zeta_\ell} \, \d x$. In the case $\zeta_\ell = 0$, we interpret this Poisson point process as consisting of a single instance of the value $1$ and then a countably infinite repetition of the value $0$. In the case when $\zeta_\ell = 1$, we interpret this Poisson point process as consisting of a countably infinite repetition of the value $1$. This allows to define the quantity on the right side of \eqref{e.def.FN} for arbitrary values of the parameters in \eqref{e.rep.zeta.q}. The average of this quantity can be calculated using Lemma~\ref{l.integr.PD}: the only point that needs to be added is that in the case $\zeta_\ell = 0$, we interpret \eqref{e.def.Xell} as 
\begin{equation*}  
X_{\ell-1}(y_0,\ldots,y_{\ell-1}) := \E_{y_{\ell}} \Ll[ X_{\ell}(y_0,\ldots,y_{\ell}) \Rr] .
\end{equation*}
From this algorithmic procedure, one can check that the result does not depend on whether or not there were repetitions in the parameters in \eqref{e.rep.zeta.q}. Indeed, on the one hand, when $\zeta_\ell = \zeta_{\ell+1}$, we have
\begin{equation*}  
X_{\ell-1} (y_0,\ldots,y_{\ell-1}) = \zeta_{\ell}^{-1} \log \E_{y_\ell,y_{\ell+1}} \exp \Ll( \zeta_\ell X_{\ell+1}(y_0,\ldots,y_{\ell+1}) \Rr) ,
\end{equation*}
where $\E_{y_\ell,y_{\ell+1}}$ denotes the averaging of the variables $y_\ell, y_{\ell+1}$ when sampled independently according to the standard Gaussian measure on $\R^N$; and under this measure, the sum
\begin{equation*}  
\Ll[(2q_\ell - 2q_{\ell-1})^\frac 1 2 y_\ell + (2q_{\ell+1} - 2q_{\ell})^\frac 1 2 y_{\ell+1}\Rr] \cdot \si
\end{equation*}
has the same law as
\begin{equation*}  
(2q_{\ell+1} - 2q_{\ell-1})^\frac 1 2 y_\ell \cdot \si.
\end{equation*}
On the other hand, if $q_\ell = q_{\ell+1}$, then the term indexed by $\ell+1$ in the sum on the right side of \eqref{e.def.Xk} vanishes, and 
\begin{equation*}  
X_{\ell} (y_0,\ldots,y_{\ell}) = X_{\ell+1}(y_0,\ldots,y_{\ell+1}) .
\end{equation*}
It is thus clear in both cases that removing repetitions does not change the value of the resulting quantity.

\smallskip

\emph{Step 2.} Consider now two measures $\mu,\mu' \in \mcl P(\R_+)$ of finite support. There exist $k \in \N$, $(\zeta_\ell)_{0 \le \ell \le k}$, $(q_\ell)_{0 \le \ell \le k}$ and $(q'_\ell)_{0 \le \ell \le k}$ satisfying \eqref{e.mu.diracs}, \eqref{e.rep.zeta.q}, 
\begin{equation*}  
0 = q_{-1}' \le q_0' \le q_1' \le \cdots \le q_k' < q'_{k+1} =  \infty, \quad \text{and} \quad \mu' = \sum_{\ell = 0}^k (\zeta_{\ell+1} - \zeta_{\ell}) \de_{q'_\ell}.
\end{equation*}
Using this representation, we can rewrite the $L^1$-Wasserstein distance between the measures $\mu$ and $\mu'$ as
\begin{equation}  
\label{e.L1.wasserstein}
\E \Ll[ | X_{\mu'} - X_{\mu}|\Rr] = \sum_{\ell = 0}^k \Ll( \zeta_{\ell+1} - \zeta_{\ell}  \Rr) |q'_\ell - q_\ell|.
\end{equation}
Abusing notation, we denote
\begin{equation}  
\label{e.barFN.zq}
\bar F_N(t,\zeta,q) := \bar F_N \Ll( t,  \sum_{\ell = 0}^k (\zeta_{\ell+1} - \zeta_{\ell}) \de_{q_\ell}\Rr),
\end{equation}
and proceed to compute $\dr_{q_\ell} \bar F_N(t,\zeta,q)$, for each $\ell \in \{0,\ldots,k\}$.
For every $\sigma, \tau \in \R^N$, $\al \in \N^k$ and $\beta \in \mcl A$, we have
\begin{equation}  
\label{e.correl.H'z}
\E \Ll[ H_N'(\si,\al) z_{\beta} \cdot \tau\Rr] = 
\Ll|
\begin{array}{ll}
 (2q_{\ell} - 2 q_{\ell-1})^\frac 1 2  \, \si \cdot \tau \ & \text{if } \beta = \al_{| \ell} \text{ with } \ell \in \{0,1,\ldots,k\}, \\
 0 & \text{otherwise}.
\end{array}
\Rr.
\end{equation}
For every $\ell \in \{0,\ldots,k-1\}$, we have 
\begin{equation*}  
\dr_{q_\ell} \bar F_N(t,\zeta,q) = -\frac 1 N \E \la (2q_\ell - 2q_{\ell - 1})^{-\frac 1 2} z_{\al_{|\ell}}\cdot \si - (2q_{\ell+1} - 2q_{\ell})^{-\frac 1 2} z_{\al{|(\ell+1)}}\cdot \si \ra.
\end{equation*}
By \eqref{e.correl.H'z} and Gaussian integration by parts, see e.g.\ \cite[Lemma~1.1]{pan}, we obtain
\begin{align}  
\label{e.expr.drq}
\dr_{q_\ell} \bar F_N(t,\zeta,q) 
& 
= \frac 1 N \E \la \Ll(\1_{\{\al_{|\ell} \, = \, \al'_{|\ell}\}} - \1_{\{\al_{|(\ell+1)}  =  \al'_{|(\ell+1)}\}} \Rr)\sigma \cdot \sigma' \ra
\\
\notag
& = 
\frac 1 N\E \la \1_{\{\al \wedge \al'=\ell\}} \, \sigma \cdot \sigma' \ra.
\end{align}
The same reasoning also shows that
\begin{equation*}  
\dr_{q_k} \bar F_N(t,\zeta,q) = \frac 1 N \E \la \1_{\{\al = \al'\}} \si \cdot \si' \ra,
\end{equation*}
so that the last identity in \eqref{e.expr.drq} is also valid for $\ell = k$. 
In particular, for every $\ell \in \{0,\ldots,k\}$, we have by \eqref{e.ass.support} that
\begin{equation*}  
\Ll|\dr_{q_\ell} \bar F_N(t,\zeta,q) \Rr|\le  \E \la \1_{\{\al \wedge \al'=\ell\}} \ra = (\zeta_{\ell+1} - \zeta_{\ell}),
\end{equation*}
and thus, by integration,
\begin{equation*}  
\Ll|\bar F_N(t,\zeta, q') - \bar F_N(t,\zeta,q)\Rr| \le \sum_{\ell = 0}^k (\zeta_{\ell+1} - \zeta_\ell) \Ll| q'_{\ell} - q_\ell \Rr|.
\end{equation*}
A comparison with \eqref{e.L1.wasserstein} then yields the desired result.
\end{proof}
We can also use Lemma~\ref{l.integr.PD} to give a more precise meaning to the vaguely stated monotonicity claim of $\dr_{\mu} f \ge 0$ expressed in the paragraph below \eqref{e.def.xi*}, already at the level of the functions $\bar F_N$. 
\begin{lemma}
\label{l.pos.dr_q}
Let $\zeta,q$ be parameters as in \eqref{e.def.zeta.q}, and let $\bar F_N(t,\zeta,q)$ be as in \eqref{e.barFN.zq}. For every $\ell \in \{0,\ldots, k\}$, we have
\begin{equation}
\label{e.pos.dr_q}
\dr_{q_\ell} \bar F_N(t,\zeta,q) \ge 0.
\end{equation}
\end{lemma}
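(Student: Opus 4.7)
The starting point is the identity established in the proof of Proposition~\ref{p.continuity}, namely
\begin{equation*}
\dr_{q_\ell} \bar F_N(t,\zeta,q) \;=\; \frac{1}{N}\,\E \la \1_{\{\al \wedge \al' = \ell\}}\, \si \cdot \si' \ra,
\end{equation*}
valid for every $\ell \in \{0,\ldots,k\}$; the claim~\eqref{e.pos.dr_q} thus reduces to showing that this two-replica expectation is non-negative.

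My plan is to exhibit this quantity as the average of a squared norm by exploiting the hierarchical structure of the Poisson--Dirichlet cascade. Lemma~\ref{l.integr.PD} identifies $-N \bar F_N(t,\mu) = \E \, \E_{y_0} X_0(y_0)$ with an iterated Gaussian integration: a standard Gaussian integration against $y_0$, followed by tilted Gaussian integrations against $y_1, \ldots, y_k$ (with tilts prescribed by the functions $X_\ell$), and finally an average under the Gibbs measure $G_k(\cdot \mid y_0, \ldots, y_k)$ on $\si$. The analogous representation for two-replica observables restricted to the event $\{\al \wedge \al' = \ell\}$ (an event of probability $\zeta_{\ell+1} - \zeta_\ell$ by Lemma~\ref{l.overlap.pd}) asserts that the two replicas share the variables $y_0, \ldots, y_\ell$ corresponding to their common ancestry, but have \emph{independent} continuations $(y_{\ell+1}, \ldots, y_k)$ and $(y'_{\ell+1}, \ldots, y'_k)$ drawn from independent tilted Gaussian laws, with $\si$ and $\si'$ then sampled from the corresponding level-$k$ Gibbs measures. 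This two-replica extension is derived by mimicking the recursion behind Lemma~\ref{l.integr.PD}, now applied to a joint observable, together with the invariance of the Poisson--Dirichlet cascade under reweighting used in the proof of Lemma~\ref{l.overlap.pd}.

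Granting this representation, the conclusion is immediate: conditional on the disorder $H_N$ and on the shared variables $(y_0, \ldots, y_\ell)$, the spins $\si$ and $\si'$ are independent and identically distributed. Writing $\bar\si$ for their common conditional mean,
\begin{equation*}
\E\!\left[\, \si \cdot \si' \;\middle|\; H_N,\, y_0, \ldots, y_\ell,\; \al \wedge \al' = \ell \,\right] \;=\; |\bar\si|^2 \;\ge\; 0,
\end{equation*}
and integrating over the remaining randomness preserves the inequality. The main obstacle is to make the two-replica cascade representation precise at the level of notation, since Lemma~\ref{l.integr.PD} is stated only for a single replica; this extension is, however, a standard consequence of the theory of Ruelle probability cascades (see in particular the invariance statement recalled from~\cite[(2.26)]{pan} in the proof of Lemma~\ref{l.overlap.pd}), and the careful accounting of which Gaussian variables are shared versus independent is the only bookkeeping that needs to be carried out.
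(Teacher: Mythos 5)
Your conceptual picture is exactly right, and it matches what the paper's proof ultimately establishes: equation~\eqref{e.pos.dr_q.precise} writes $\dr_{q_m}\,\E\bigl[F_N \vb (H_N(\sigma))_\sigma\bigr]$ as the factor $(\zeta_{m+1}-\zeta_m)/N$ times the average, under the tilted law $\E_y\bigl[\,\cdot\;D_{1m}\bigr]$ of the shared variables $(y_0,\ldots,y_m)$, of the sum of squares $\sum_i\bigl(\E_{y_{\ge m+1}}[\la \si_i \ra D_{m+1,k}]\bigr)^2$ --- precisely the $|\bar\si|^2$ you describe, with the ``probability'' factor $\zeta_{m+1}-\zeta_m$ coming from Lemma~\ref{l.overlap.pd}. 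Where you and the paper diverge is in how this representation is reached. You propose importing a ``standard'' two-replica cascade representation; the paper instead derives the sum-of-squares form directly from Lemma~\ref{l.integr.PD} by a descending recursion on $\dr_{q_m}X_{\ell-1}$, Gaussian integration by parts in the variables $y_{mi}$, and explicit tracking of which Radon--Nikodym factors $D_{\ell,m}$ arise at each level. This difference is not cosmetic: what you dismiss as ``only bookkeeping'' is exactly what the paper flags as ``the subtle point being in the identification of the correct measure with respect to which the average is taken,'' and the acknowledgements record that an earlier argument for this very lemma contained an error caught by Panchenko. The reference you give, the reweighting invariance \cite[(2.26)]{pan} used in Lemma~\ref{l.overlap.pd}, yields an identity for cascade weights under exponential tilting, not a factorization of the conditional law of $(\sigma,\sigma')$ given $\{\al\wedge\al'=\ell\}$ and the shared $y$'s; deriving the latter from the former is the actual content of the proof. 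So the framing is correct and instructive, but to close the argument you either need to carry out the recursion (as the paper does) or produce a precise published statement of the two-replica conditional-independence decomposition you are invoking --- the assertion that one exists is, at this exact point, where the delicacy lives.
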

\begin{proof}
The proof is similar to that of \cite[Proposition~14.3.2]{Tbook2}; see also \cite{gue01,pantal07, barra1,barra2}. We will rewrite the left side of \eqref{e.pos.dr_q} as an averaged overlap, taking Lemma~\ref{l.integr.PD} as a starting point, the subtle point being in the identification of the correct measure with respect to which the average is taken. We start by introducing some notation. We let $X_k,X_{k-1},\ldots,X_0$ be as in Lemma~\ref{l.integr.PD}, and define $X_{-1} := \E_{y_0} \Ll[ X_0(y_0) \Rr]$. For every $\ell \le m \in \{0,\ldots,k\}$, we write
\begin{equation*}  
D_{\ell,m} = D_{\ell m} := \frac{\exp \Ll( \zeta_{\ell} X_\ell + \cdots + \zeta_{k} X_m \Rr)}{\E_{y_\ell} \Ll[ \exp \Ll( \zeta_\ell X_\ell \Rr) \Rr] \, \cdots \ \E_{y_m} \Ll[ \exp \Ll( \zeta_\ell X_m \Rr) \Rr] }.
\end{equation*}
We also write $\E_{y_{\ge \ell}}$ to denote the integration of the variables $y_{\ell}, \ldots, y_k$ along the standard Gaussian measure, and we write $\E_y$ as shorthand for $\E_{y_{\ge 0}}$. Within the current proof (and only here), we abuse notation and use $\la \cdot \ra$ with a meaning slightly different from that in \eqref{e.def.gibbs}, namely,
\begin{multline*}  
\la f(\si) \ra := \exp \Ll( -X_k \Rr) 
\\
\times \int f(\si) \exp \Ll( \sqrt{2t} H_N(\sigma) - N t\xi \Ll( N^{-1} |\si|^2 \Rr)   + \sum_{\ell = 0}^k \Ll(2q_{\ell} - 2q_{\ell-1}\Rr)^\frac 1 2 y_\ell \cdot \sigma - q_k |\sigma|^2  \Rr) \, \d P_N(\sigma).
\end{multline*}
Defining $F_N(t,\zeta,q)$ as in \eqref{e.barFN.zq} (substituting $\bar F_N$ by $F_N$ there), we will show that for every $\ell \in \{0,\ldots,k\}$,
\begin{multline}
\label{e.pos.dr_q.precise}
\dr_{q_\ell} \E \Ll[ F_N(t,\zeta,q)  \vb   (H_N(\sigma))_{\sigma \in \{\pm 1\}^N} \Rr] 
\\
= \frac {\zeta_{m+1} - \zeta_m} N \sum_{i = 1}^N \E_{y} \Ll[ \Ll(\E_{y_{\ge m+1}} \Ll[\la \si_i \ra D_{m+1,k} \Rr]\Rr)^2 D_{1m} \Rr].
\end{multline}
This clearly implies the lemma. We decompose the proof of \eqref{e.pos.dr_q.precise} into two steps.

\smallskip

\emph{Step 1.} We show that, for every $\ell,m \in \{0,\ldots,k\}$,
\begin{equation}  
\label{e.calc.dr_q}
\dr_{q_{m}} X_{\ell-1} = \E_{y_{\ge \ell}} \Ll[ \Ll(\dr_{q_m} X_k\Rr) D_{\ell k} \Rr].
\end{equation}
We prove the result by decreasing induction on $\ell$. By \eqref{e.def.Xell}, the result is clear for $\ell = k$. Let $\ell \in \{1,\ldots,k\}$, and assume that the statement \eqref{e.calc.dr_q} holds with $\ell$ replaced by $\ell + 1$. Using \eqref{e.def.Xell} again, we obtain \eqref{e.calc.dr_q} itself. This proves \eqref{e.def.Xell} for every $\ell \in \{1,\ldots, k\}$. The statement for $\ell = 0$ is then immediate (recall that $\zeta_0 = 0$). Similarly, for every $\ell ,m \in \{0,\ldots,k\}$ with $m > \ell$ and $i \in \{1,\ldots, N\}$, we have
\begin{equation}  
\label{e.calc.dr_y}
\dr_{y_{mi}} X_{\ell-1} = \E_{y_{\ge \ell}} \Ll[ \Ll(\dr_{y_{mi}} X_k \Rr) D_{\ell k} \Rr],
\end{equation}
where we write $y_m = (y_{mi})_{1 \le i \le N} \in \R^N$. For $m \le \ell$, we clearly have $\dr_{y_{mi}} X_{\ell -1} = 0$.

\smallskip

\emph{Step 2.}
Notice that, for every $m \in \{0,\ldots,k-1\}$,
\begin{equation}  
\label{e.dr_qX}
\dr_{q_m} X_k = \la (2q_m - 2q_{m-1})^{-\frac 1 2} y_m \cdot \sigma - (2q_{m+1}-2q_m)^{-\frac 1 2} y_{m+1} \cdot \si \ra.
\end{equation}
We are ultimately interested in understanding $\dr_{q_m} X_{-1}$, which, in view of \eqref{e.calc.dr_q}, prompts us to study, for every $i \in \{1,\ldots,N\}$,
\begin{equation}  
\label{e.drq.ibp}
\E_y \Ll[ y_{mi}  \la \si_i \ra D_{1k} \Rr] = \E_y \Ll[ \dr_{y_{mi}} \Ll(\la \si_i \ra D_{1k} \Rr)\Rr],
\end{equation}
where we performed a Gaussian integration by parts to get the equality. We have
\begin{equation}  
\label{e.dry.easy}
\dr_{y_{mi}} \la \si_i \ra = (2q_{m} - 2q_{m-1})^\frac 12 \Ll(\la \si_i^2 \ra - \la \si_i \ra^2 \Rr),
\end{equation}
and 
\begin{equation*}  
\dr_{y_{mi}} D_{1k} 
 = \Ll(\sum_{\ell = m}^k \zeta_\ell \dr_{y_{mi}} X_{\ell} 
     - \sum_{\ell = m+1}^k \zeta_{\ell} \frac{\E_{y_{\ell}} \Ll[\dr_{y_{mi}} X_{\ell} \exp \Ll( \zeta_\ell X_\ell \Rr)   \Rr]}{\E_{y_\ell} \Ll[ \exp \Ll( \zeta_\ell X_\ell \Rr)  \Rr] }\Rr) D_{1k}.
\end{equation*}
We next derive from \eqref{e.calc.dr_y} that, for every $\ell,m \in \{0,\ldots,k\}$ with $m > \ell$ and $i \in \{1,\ldots, N\}$,
\begin{equation*}
\dr_{y_{mi}} X_{\ell-1} = (2q_m - 2q_{m-1})^\frac 12 \, \E_{y_{\ge \ell}} \Ll[ \la \si_i \ra D_{\ell k} \Rr] .
\end{equation*}
It thus follows that
\begin{equation*}  
\frac{\E_{y_{\ell}} \Ll[\dr_{y_{mi}} X_{\ell} \exp \Ll( \zeta_\ell X_\ell \Rr)   \Rr]}{\E_{y_\ell} \Ll[ \exp \Ll( \zeta_\ell X_\ell \Rr)  \Rr] } 
= (2q_m - 2q_{m-1})^\frac 1 2 \, \E_{y_{\ge \ell}} \Ll[ \la \si_i \ra D_{\ell k} \Rr] ,
\end{equation*}
and 
\begin{align*}  
\dr_{y_{mi}} D_{1k} 
& =  (2q_m - 2q_{m-1})^\frac 1 2 \Ll(\sum_{\ell = m}^k \zeta_\ell \E_{y \ge \ell+1} \Ll[ \la \si_i \ra D_{\ell+1,k} \Rr]  - \sum_{\ell = m+1}^k \zeta_{\ell} \E_{y_{\ge \ell}} \Ll[ \la \si_i \ra D_{\ell k} \Rr] \Rr) D_{1k}
\\
& = (2q_m - 2q_{m-1})^\frac 1 2 \Ll(\la \si_i \ra - \sum_{\ell = m}^k  (\zeta_{\ell+1} - \zeta_\ell) \E_{y_{\ge \ell+1}} \Ll[ \la \si_i \ra D_{\ell+1,k} \Rr] \Rr) D_{1k},
\end{align*}
with the understanding that $\E_{y_{\ge k+1}}$ is the identity map, $D_{k+1,k} = 1$, and recalling that $\zeta_{k+1} = 1$. Combining this with \eqref{e.drq.ibp} and \eqref{e.dry.easy}, we thus get that
\begin{multline*}  
(2q_m - 2q_{m-1})^{-\frac 1 2} \E_y \Ll[ y_{mi}  \la \si_i \ra D_{1k} \Rr] 
\\
=  \E_y \Ll[ \Ll(\la \si_i^2 \ra - \la \si_i \ra \sum_{\ell = m}^k  (\zeta_{\ell+1} - \zeta_\ell) \E_{y_{\ge \ell+1}} \Ll[ \la \si_i \ra D_{\ell+1,k} \Rr] \Rr) D_{1k}\Rr] .
\end{multline*}
Using this identity in conjunction with \eqref{e.calc.dr_q} and \eqref{e.dr_qX}, we arrive at
\begin{equation*}  
-\dr_{q_{m}} X_{-1} = (\zeta_{m+1} - \zeta_m) \sum_{i = 1}^N\E_y \Ll[  \E_{y_{\ge m+1}} \Ll[ \la \si_i \ra D_{m+1,k} \Rr] \la \si_i \ra D_{1k} \Rr] .
\end{equation*}
This identity is also valid when $m = k$, as can be checked by following the same argument.
We can then write $D_{1k} = D_{1m} D_{m+1,k}$, and use that $D_{1m}$ does not depend on $y_{m+1},\ldots,y_k$, to conclude that
\begin{equation*}
-\dr_{q_{m}} X_{-1} = (\zeta_{m+1} - \zeta_m) \sum_{i = 1}^N \E_{y} \Ll[ \Ll(\E_{y_{\ge m+1}} \Ll[\la \si_i \ra D_{m+1,k} \Rr]\Rr)^2 D_{1m} \Rr].
\end{equation*}
By Lemma~\ref{l.integr.PD}, this is \eqref{e.pos.dr_q.precise}.
\end{proof}

We can now state the conjecture generalizing Theorem~\ref{t.main}.

\begin{conjecture} 
\label{c.main}
Assume \eqref{e.ass.support} and that there exists a function $\psi : \mcl P_*(\R_+) \to \R$ such that for every $\mu \in \mcl P_*(\R_+)$, $\bar F_N(0,\mu)$ converges to $\psi(\mu)$ as $N$ tends to infinity. For every $t \ge 0$ and $\mu \in \mcl P_*(\R_+)$, we have
\begin{equation*}  
\lim_{N\to +\infty} \bar F_N(t,\mu) = f(t,\mu),
\end{equation*}
where $f : \R_+ \times \mcl P(\R_+) \to \R$ solves the Hamilton-Jacobi equation in \eqref{e.hj}.
\end{conjecture}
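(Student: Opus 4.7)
The plan is to execute, for this more general setting, the Hamilton--Jacobi strategy hinted at in the introduction: derive an approximate version of~\eqref{e.hj} for $\bar F_N$; extract one of the two matching inequalities from a Guerra--Toninelli interpolation; establish the other by an Aizenman--Sims--Starr cavity decomposition combined with Panchenko's ultrametricity theorem; and finally extend from $\mu$ of finite support to arbitrary $\mu \in \mcl P_*(\R_+)$ via Proposition~\ref{p.continuity}.

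First, I would show that $\bar F_N$ is an \emph{exact supersolution} of~\eqref{e.hj} at finite~$N$. Fix $\mu$ of the form~\eqref{e.mu.diracs}. Gaussian integration by parts in the randomness of $H_N$ applied to~\eqref{e.def.FN} (the term $-Nt\xi(N^{-1}|\sigma|^2)$ cancelling the diagonal contribution) yields $\partial_t \bar F_N(t,\mu) = \E\langle \xi(N^{-1}\sigma\cdot\sigma')\rangle_{t,\mu}$. Combining this with~\eqref{e.expr.drq}, Lemma~\ref{l.overlap.pd}, and the identification $\partial_\mu \bar F_N(t,\mu,q_\ell) = (\zeta_{\ell+1}-\zeta_\ell)^{-1}\partial_{q_\ell}\bar F_N$ dictated by~\eqref{e.taylor.measure}, one arrives at
\[
\partial_t \bar F_N - \int \xi(\partial_\mu \bar F_N)\, d\mu = \sum_{\ell=0}^{k}(\zeta_{\ell+1}-\zeta_\ell)\Bigl(\E\bigl[\xi(N^{-1}\sigma\cdot\sigma')\,\big|\,\alpha\wedge\alpha'=\ell\bigr] - \xi\bigl(\E\bigl[N^{-1}\sigma\cdot\sigma'\,\big|\,\alpha\wedge\alpha'=\ell\bigr]\bigr)\Bigr)\ge 0,
\]
the inequality being Jensen's, since $\xi$ is convex on $\R_+$. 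The right-hand side is a weighted sum of conditional variances of the overlap and is, in particular, non-negative.

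Second, I would upgrade this supersolution property to a Guerra-type lower bound. For any trial measure $\nu \in \mcl P_*(\R_+)$ of finite support, a standard linear interpolation between the enriched Hamiltonian of~\eqref{e.def.FN} and a cavity Hamiltonian parametrized by~$\nu$, combined with the Legendre inequality $\xi(r) \ge rs - \xi^*(s)$ applied pointwise under the Gibbs measure, yields
\[
\bar F_N(t,\mu) \ge \bar F_N(0,\nu) - t\, \E\Bigl[\xi^*\Bigl(\frac{X_\nu - X_\mu}{t}\Bigr)\Bigr] + o_N(1).
\]
Letting $N \to \infty$, using the hypothesis $\bar F_N(0,\nu) \to \psi(\nu)$, taking the supremum over $\nu$ of finite support, and extending to arbitrary $\nu \in \mcl P_*(\R_+)$ via Proposition~\ref{p.continuity}, the Hopf--Lax formula~\eqref{e.hopflax} delivers $\liminf_{N\to\infty}\bar F_N(t,\mu) \ge f(t,\mu)$.

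Third, and this is the \emph{main obstacle}, one needs the matching upper bound $\limsup_{N\to\infty}\bar F_N(t,\mu) \le f(t,\mu)$; equivalently, one must show that the Jensen defect of the first step vanishes asymptotically. The strategy would be an Aizenman--Sims--Starr decomposition, writing the one-step increment $N\bar F_N-(N-1)\bar F_{N-1}$ as a single-spin cavity computation against the asymptotic overlap array of the enriched Gibbs measure, and then identifying this array: a small perturbation of $\xi$ enforces the Ghirlanda--Guerra identities, and Panchenko's ultrametricity theorem then forces $N^{-1}\sigma\cdot\sigma'$ to concentrate on $\{q_0,\ldots,q_k\}$ with weights $(\zeta_{\ell+1}-\zeta_\ell)$, thereby killing the Jensen defect in the limit and reducing the cavity computation to an optimizer of the Hopf--Lax supremum~\eqref{e.hopflax}. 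The difficulty is that Panchenko's ultrametricity is presently established only for $P_N$ of sphere or Ising type and $\xi$ convex on $\R_+$; extending it uniformly over the class allowed by~\eqref{e.ass.support} is the essential missing ingredient, and is exactly what prevents Conjecture~\ref{c.main} from being a theorem with currently available tools. Once both inequalities are in hand for $\mu$ of finite support, Proposition~\ref{p.continuity} together with the $1$-Lipschitz continuity of $f(t,\cdot)$ on $\mcl P_1(\R_+)$ (visible directly from~\eqref{e.hopflax}) extends the identification to every $\mu \in \mcl P_*(\R_+)$.
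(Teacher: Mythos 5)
The statement you are addressing is stated in the paper as a \emph{conjecture}, and the paper offers no proof of it: the author is explicit that only the special case of Theorem~\ref{t.main} (sphere or Ising, $\mu=\de_0$) is established, and that as a rephrasing of known results. So there is no ``paper's own proof'' to compare your proposal against. Your text is not a proof either, and you say so yourself: the third step---the matching upper bound under the general hypothesis~\eqref{e.ass.support}---is left open and correctly identified as the essential missing ingredient. As a research program, your outline (exact supersolution at finite $N$, Guerra-type lower bound yielding the Hopf--Lax supremum, Aizenman--Sims--Starr plus Ghirlanda--Guerra and ultrametricity for the upper bound, continuity to pass from finitely supported $\mu$) is the natural one and is consistent with the philosophy the paper sketches in its introduction. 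That part of your answer is sound as a diagnosis of why Conjecture~\ref{c.main} is a conjecture.

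There are, however, additional hurdles you present as done that are not. In your first step, the Jensen argument needs $\xi$ convex on the whole range of the overlap $N^{-1}\sigma\cdot\sigma'$, and under~\eqref{e.ass.support} this range is $[-1,1]$, not $[0,1]$; $\xi$ is only assumed convex on $\R_+$, so if odd $p$-spin terms are present the conditional Jensen defects need not have a sign. The paper flags exactly this caveat at~\eqref{e.hj.RS} (``in the case when $\xi$ is convex over $\R$\dots''), and note that Lemma~\ref{l.pos.dr_q} gives $\dr_{q_\ell}\bar F_N\ge 0$ but not $\dr_{q_\ell}\bar F_N\le\zeta_{\ell+1}-\zeta_\ell$ times something in $\R_+$ when negative overlaps occur. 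Second, the Guerra step as you state it---linear interpolation plus pointwise Legendre inequality---does not by itself land on the Hopf--Lax form: to see $-t\,\E[\xi^*((X_\nu-X_\mu)/t)]$ emerge one must choose the interpolating Hamiltonian so that its remainder term has the right sign and limit, which in the existing literature involves the cascade's overlap structure and is not automatic. Neither issue contradicts the conjecture, but both belong with the ultrametricity problem on your list of what is actually missing.
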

Recall that for the purposes of the present paper, we take the Hopf-Lax formula~\eqref{e.hopflax} as the definition of the solution to \eqref{e.hj}. 


%
%
%
%
%
%

\section{Convergence of initial condition}
\label{s.conv.init}

We now give two typical situations in which the convergence of $\bar F_N(0,\cdot)$ to some limit is valid. Whenever the limit exists, we write, for every $\mu \in \mcl P_*(\R_+)$,
\begin{equation}
\label{e.def.psi}
\psi(\mu) := \lim_{N\to \infty} \bar F_N(0,\mu).
\end{equation}
In agreement with Conjecture~\ref{c.main}, the function $\psi$ is the initial condition we need to use for the Hamilton-Jacobi equation~\eqref{e.hj}.
\begin{proposition}[Convergence of initial condition]
\label{p.converge.psi}
(1) If the measure $P_N$ is of the product form $P_N = P_1^{\otimes N}$, with, say, $P_1$ of bounded support, then $\bar F_N(0,\cdot) = \bar F_1(0,\cdot)$.

\smallskip

(2) For every $\mu \in \mcl P(\R_+)$ of compact support and $q \ge 0$ such that $\mu([0,q]) = 1$, let
\begin{multline}  
\label{e.def.psi.circ}
\psi^\circ (\mu) := \\
 \inf  \Ll\{\int_0^q \frac 1 {b-2\int_s^{q} \mu([0,r]) \, \d r} \, \d s +  \frac 1 2 \Ll(b - 1 - \log b\Rr) - q \ : \ b > 2\int_0^q \mu([0,r]) \, \d r \Rr\}.
\end{multline}
The right side of \eqref{e.def.psi.circ} does not depend on the choice of $q$ satisfying $\mu([0,q]) = 1$, and the mapping $\mu \mapsto \psi^\circ(\mu)$ can be extended by continuity to $\mcl P_1(\R_+)$. Moreover, if the measure $P_N$ is the uniform measure on the sphere $\{\si \in \R^N \ : \ |\si^2| = N\}$, then for every $\mu \in \mcl P_1(\R_+)$, we have
\begin{equation}  
\label{e.conv.psi.circ}
\lim_{N \to \infty} \bar F_N(0,\mu) = \psi^\circ(\mu).
\end{equation}
\end{proposition}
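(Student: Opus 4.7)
I would apply Lemma~\ref{l.integr.PD} and exploit that the recursive construction in \eqref{e.def.Xk}--\eqref{e.def.Xell} is compatible with products of reference measures. At $t=0$, the integrand inside $X_k$ factors as $\prod_{i=1}^N \exp\bigl(\sum_\ell(2q_\ell-2q_{\ell-1})^{1/2}y_{\ell,i}\sigma_i - q_k\sigma_i^2\bigr)$, so under $P_N=P_1^{\otimes N}$ one has $X_k(y_0,\ldots,y_k) = \sum_{i=1}^N X_k^{(i)}(y_{0,i},\ldots,y_{k,i})$ with $X_k^{(i)}$ the scalar analogue. Since each Gaussian expectation $\E_{y_\ell}$ over $\R^N$ is itself a product over coordinates, the identity $\log \E\prod_i e^{\zeta_\ell X_\ell^{(i)}} = \sum_i \log \E e^{\zeta_\ell X_\ell^{(i)}}$ allows one to prove by downward induction on $\ell$ that $X_\ell = \sum_i X_\ell^{(i)}$ at every level. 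Taking the outermost expectation yields $\bar F_N(0,\mu) = \bar F_1(0,\mu)$ exactly, with no limit to take.

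\textbf{Part (2): saddle-point and cascade.} For the spherical case I would again start from Lemma~\ref{l.integr.PD} but now pass to the limit via saddle-point analysis. Writing $g := \sum_\ell(2q_\ell-2q_{\ell-1})^{1/2}y_\ell$, the quantity $X_k$ at $t=0$ is $\log\int_{|\sigma|^2=N}\exp(g\cdot\sigma - q_kN)\,\d P_N(\sigma)$; by rotational invariance it depends on $y$ only through $|g|^2$. A classical Lagrange-multiplier representation of the uniform measure on the sphere (tilt by $e^{-b|\sigma|^2/2}$, integrate over $\R^N$, optimize in $b$) yields, uniformly for $|g|/\sqrt N$ in a bounded set,
\begin{equation*}
\frac{1}{N}X_k = \inf_{b>0}\left\{\frac{|g|^2}{2bN} + \frac{1}{2}(b-1-\log b)\right\} - q_k + o(1).
\end{equation*}
Setting $u_\ell := |y_\ell|^2/N$, a downward induction then shows $X_\ell/N$ is close to a deterministic function $\varphi_\ell(u_0,\ldots,u_\ell)$: the Gaussian integration against $y_\ell$ in \eqref{e.def.Xell} concentrates on $u_\ell = 1$ by Laplace's method and turns the recursion into a one-dimensional extremum problem conjugate to $u_\ell$. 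Composing these extrema from $\ell=k$ down to $0$, and re-expressing the sums $\sum_{\ell\ge j}\zeta_{\ell+1}(q_{\ell+1}-q_\ell)$ as $\int_{q_{j-1}}^{q}\mu([0,r])\,\d r$, produces exactly the right-hand side of \eqref{e.def.psi.circ}, establishing \eqref{e.conv.psi.circ} for finitely-supported $\mu$.

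\textbf{Independence of $q$, continuity, extension.} Independence of the right-hand side of \eqref{e.def.psi.circ} from the choice of $q \ge \max\supp\mu$ follows by the change of variables $b' = b - 2(q'-q)$: for $q'>q$ with $\mu([0,q'])=1$, the extra contribution $\int_q^{q'}\frac{\d s}{b-2(q'-s)} = \frac{1}{2}\log(b/b')$ combines with the shift $-q\to-q'$ and the passage $\frac{1}{2}(b-1-\log b)\to\frac{1}{2}(b'-1-\log b')$ under $b\mapsto b'$ to leave the functional invariant. Continuity of $\psi^\circ$ on $\mcl P_1(\R_+)$ is then inherited from the uniform Lipschitz estimate \eqref{e.continuity} of Proposition~\ref{p.continuity} by letting $N\to\infty$; this simultaneously provides the continuous extension of $\psi^\circ$ and upgrades \eqref{e.conv.psi.circ} from finitely-supported $\mu$ to all of $\mcl P_1(\R_+)$.

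\textbf{Main obstacle.} The one substantive technical step is the cascaded Laplace analysis underlying Part~(2): one needs sharp $o(N)$ asymptotics uniform on the scale where the Gaussian weights $e^{-|y_\ell|^2/2}$ are not exponentially small, and those errors must be propagated through the $k+1$ nested integrations without accumulating past $o(N)$. This analysis is by now standard for spherical Parisi-type formulas and could be imported essentially verbatim from \cite{Tbook1,Tbook2,pan}.
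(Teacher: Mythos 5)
Your proposal matches the paper's proof step for step: part (1) uses the factorization of the cascade recursion under product measures via Lemma~\ref{l.integr.PD}; the $q$-independence in part (2) is the same change of variables $b\mapsto b-2(q'-q)$; and the extension to $\mcl P_1(\R_+)$ follows from the Lipschitz bound of Proposition~\ref{p.continuity}. The only difference is that where you sketch the Lagrange-multiplier/Laplace asymptotics for the spherical cascade, the paper simply cites \cite[Proposition~3.1]{tal.sph}, so you have essentially reconstructed the referenced argument rather than departed from it.
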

\begin{proof}
For part (1), we appeal to Lemma~\ref{l.integr.PD} and observe that, when $t = 0$, the definition of $X_k$ given there becomes
\begin{equation*}  
X_k(y_0,\ldots, y_k) = \sum_{i = 1}^N \log \int_{\R} \exp \Ll( \sum_{\ell = 0}^k \Ll( 2 q_{\ell} - 2 q_{\ell-1} \Rr)^\frac 1 2 y_{\ell,i} \si_i - q_k \si_i^2\Rr)  \, \d P_1(\si_i) .
\end{equation*}
Notice that the summands indexed by $i$ are independent random variables under~$\E_{y_k}$, and this structure is preserved as we go down the levels, up to the definition of $X_0$, where we end up with a sum of $N$ terms that are deterministic and all equal to a constant which does not depend on $N$. This proves the claim (see also \cite[(2.60)]{pan}).

\smallskip

For part (2), we first verify that the right side of \eqref{e.def.psi.circ} does not depend upon the choice of $q \ge 0$ satisfying $\mu([0,q]) = 1$. Indeed, for every $q$ satisfying $\mu([0,q]) = 1$, $q' \ge q$ and $b > 2 \int_0^{q'} \mu([0,r]) \, \d r$, we have
\begin{multline*}  
\int_0^{q'} \frac 1 {b-2\int_s^{q'} \mu([0,r]) \, \d r} \, \d s 
\\ 
= \int_0^q \frac 1 {b-2(q'-q)-2\int_s^{q} \mu([0,r]) \, \d r} \, \d s  
+ 
\frac 1 2 \Ll(\log b - \log \Ll[ b-2(q'-q) \Rr]\Rr).
\end{multline*}
We thus obtain that
\begin{multline*}  
\int_0^{q'} \frac 1 {b-2\int_s^{q'} \mu([0,r]) \, \d r} \, \d s + \frac 1 2 \Ll( b-1-\log b \Rr) - q' \\
= \int_0^{q} \frac 1 {b-2(q'-q) - 2\int_s^{q} \mu([0,r]) \, \d r} \, \d s + \frac 1 2 \Ll( b-2(q'-q) - 1-\log \Ll[ b-2(q'-q) \Rr]  \Rr) - q.
\end{multline*}
Taking the infimum over $b > 2 \int_0^{q'} \mu([0,r]) \, \d r = 2(q'-q) + 2 \int_0^q \mu([0,r]) \, \d r$ concludes the verification of the fact that the right side of \eqref{e.def.psi.circ} does not depend on the choice of $q$ satisfying $\mu([0,q]) = 1$. 

\smallskip

In order to verify the convergence in \eqref{e.conv.psi.circ}, we start by considering the case of a measure of finite support. In this case, we can follow the arguments leading to \cite[Proposition~3.1]{tal.sph} and obtain \eqref{e.conv.psi.circ}. The full result then follows by the continuity property of $\bar F_N$, see Proposition~\ref{p.continuity}.
\end{proof}

It so happens that in the case when $P_N$ is the uniform measure on $\{-1,1\}^N$, the initial condition $\psi = \lim_{N \to \infty} \bar F_N(0,\cdot) = \bar F_1(0,\cdot)$ can itself be described in terms of a Hamilton-Jacobi equation of second order \cite{parisi80}. We recall this fact in the proposition below for completeness, and so as to clarify the small modifications necessary to match the different presentation explored in the present paper. As far as I understand, the fact that the initial condition admits such a representation seems to be unrelated to the (first-order) Hamilton-Jacobi structure explored in the rest of the paper. Notice that the possibility to find an equation posed in $\R_+\times \R$ to describe $\psi$, as opposed to a higher-dimensional space, relies crucially on the fact that the norm of $\sigma$ is deterministic. This representation also makes it less transparent that $\psi \ge 0$. 
%

\begin{proposition}[Initial condition for Ising spins]
\label{p.init}
Assume that $P_N$ is the uniform measure on $\{-1,1\}^N$, and denote $\psi = \lim_{N \to \infty} \bar F_N(0,\cdot) = \bar F_1(0,\cdot)$. For every $\mu \in \mcl P(\R_+)$ with compact support and $q \ge 0$ such that $\mu([0,q]) = 1$, letting $u_\mu : [0,q]\times \R \to \R$ be the solution of 
\begin{equation}
\label{e.pde.init}
\Ll\{
\begin{aligned}
& \dr_s u_\mu  + \dr_x^2 u_\mu - \mu([0,s]) \, (\dr_x u_\mu)^2 + 1 = 0 & \quad \text{on } [0,q] \times \R,\\
& u_\mu(q,\cdot) = -\log \cosh & \quad \text{on } \R,
\end{aligned}
\Rr.
\end{equation}
we have $\psi(\mu) = u_\mu(0,0)$.
\end{proposition}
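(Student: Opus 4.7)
The plan is to first reduce to measures of finite support, then explicitly compute $\bar F_1(0,\mu)$ via Lemma~\ref{l.integr.PD}, and finally identify this explicit expression with $u_\mu(0,0)$ via the Cole-Hopf transformation.

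By Proposition~\ref{p.converge.psi}(1) applied to the product measure $P_N = P_1^{\otimes N}$ with $P_1 = \frac 1 2(\de_{-1} + \de_{1})$, we have $\psi(\mu) = \bar F_1(0,\mu)$. Since $\mu \mapsto \bar F_1(0,\mu)$ is $L^1$-Wasserstein Lipschitz (Proposition~\ref{p.continuity}), and one expects $\mu \mapsto u_\mu(0,0)$ to satisfy a similar continuity property (a classical fact about the Parisi functional, verifiable either by a direct PDE stability argument on \eqref{e.pde.init} or by invoking \cite{Tbook2,pan}), it suffices to prove the identity for $\mu$ of the form in \eqref{e.def.zeta.q}--\eqref{e.mu.diracs}. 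The independence of $u_\mu(0,0)$ on the choice of $q \geq q_k$ is verified directly from the PDE: on $[q_k, q]$ the coefficient equals $1$ and the Cole-Hopf substitution $w = \exp(-u + q - s)$ reduces \eqref{e.pde.init} to the backward heat equation, giving $u_\mu(q_k, \cdot) = -\log \cosh$ regardless of $q$. We may therefore take $q = q_k$.

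For such a $\mu$, Lemma~\ref{l.integr.PD} applied with $N = 1$ and $t = 0$, using $\si^2 = 1$ for $\si \in \{-1, 1\}$, gives
\begin{equation*}
X_k(y_0,\ldots,y_k) = -q_k + \log \cosh\Big(\sum_{\ell=0}^k a_\ell y_\ell\Big), \qquad a_\ell := (2q_\ell - 2q_{\ell-1})^{1/2}.
\end{equation*}
A straightforward induction shows that $X_\ell$ depends on $(y_0, \ldots, y_\ell)$ only through $x := \sum_{j=0}^\ell a_j y_j$; writing $X_\ell = \phi_\ell(x)$, the recursion \eqref{e.def.Xell} becomes
\begin{equation*}
\phi_{\ell-1}(x) = \zeta_\ell^{-1} \log \E\Ll[\exp\Ll(\zeta_\ell \phi_\ell(x + a_\ell Z)\Rr)\Rr],
\end{equation*}
where $Z$ is a standard Gaussian. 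Lemma~\ref{l.integr.PD} then yields $\bar F_1(0, \mu) = -\E[\phi_0(a_0 Z)]$. On the PDE side I would introduce $v(s,x) := (q_k - s) - u_\mu(s,x)$, so that $v(q_k, x) = \log \cosh(x)$ and \eqref{e.pde.init} becomes $\dr_s v + \dr_x^2 v + \mu([0,s])(\dr_x v)^2 = 0$. On each interval $[q_{\ell-1}, q_\ell]$ with $\ell \ge 1$, the coefficient $\mu([0,s])$ is constant equal to $\zeta_\ell$, and the Cole-Hopf substitution $w := \exp(\zeta_\ell v)$ linearizes the equation to the backward heat equation $\dr_s w + \dr_x^2 w = 0$, whose explicit solution gives
\begin{equation*}
v(q_{\ell-1}, x) = \zeta_\ell^{-1} \log \E\Ll[\exp\Ll(\zeta_\ell v(q_\ell, x + a_\ell Z)\Rr)\Rr].
\end{equation*}
A decreasing induction comparing this with the recursion for $\phi_\ell$ yields $v(q_\ell, x) = \phi_\ell(x) + q_k$ for every $\ell \in \{0, \ldots, k\}$. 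On the remaining interval $[0, q_0]$ where $\mu([0,s]) = 0$, the equation for $v$ is simply backward heat, so $v(0, 0) = \E[v(q_0, a_0 Z)] = q_k + \E[\phi_0(a_0 Z)]$, and therefore $u_\mu(0, 0) = q_k - v(0, 0) = -\E[\phi_0(a_0 Z)] = \bar F_1(0, \mu) = \psi(\mu)$.

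The main obstacle I anticipate is not the algebraic identification, which is a standard Cole-Hopf bookkeeping exercise, but rather the continuity argument needed to extend the identity from finitely-supported measures to general $\mu \in \mcl P(\R_+)$ of compact support. One must establish joint continuity of $\mu \mapsto u_\mu(0,0)$ in $L^1$-Wasserstein (uniformly on sets of bounded support), either via a direct stability estimate for \eqref{e.pde.init} using viscosity-solution techniques, or by invoking the classical continuity results for the Parisi functional from the literature. A secondary, purely cosmetic issue is handling possible coincidences among the $\zeta_\ell$ or $q_\ell$ in the finite-support case, which can be resolved exactly as in Step~1 of the proof of Proposition~\ref{p.continuity}.
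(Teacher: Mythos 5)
Your proof is correct and follows essentially the same route as the paper: reduce to finitely-supported measures, compute $\bar F_1(0,\mu)$ recursively via Lemma~\ref{l.integr.PD}, and match it against the PDE solution by applying a Cole--Hopf transformation on each strip $[q_{\ell-1},q_{\ell}]\times\R$ where the coefficient $\mu([0,s])$ is constant. The only cosmetic difference is your normalization $v = q_k - s - u_\mu$ (versus the paper's $v_\mu = -u_\mu$, which carries the $-(q_\ell - s)$ term in its recursion); for the continuity step you flagged, the paper's hint is to use the maximum principle to get $\|\partial_x u_\mu\|_{L^\infty}\le 1$, which yields exactly the $L^1$-Wasserstein Lipschitz bound you anticipated.
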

\begin{proof}
We first verify that $u_\mu$ does not depend on the choice of $q \ge 0$ satisfying $\mu([0,q]) = 1$. More precisely, denoting by $u_{\mu,q}$ the solution obtained for a given choice of such $q$, and letting $q' \ge q$, we have that the solutions $u_{\mu,q}$ and $u_{\mu,q'}$ coincide on $[0,q] \times \R$. Indeed, this is a consequence of the fact that, with $\phi := - \log \cosh$, we have
\begin{equation*}  
\dr_x^2 \phi - (\dr_x \phi)^2 + 1 = 0.
\end{equation*}
%
Let $\mu$ be a measure of the form \eqref{e.def.zeta.q}-\eqref{e.mu.diracs}, and let $(B_t)$ be a standard Brownian motion. We define, for every $x \in \R$,
\begin{equation*}  
v_\mu(q_k,x) := \log \cosh (x),
\end{equation*}
and then recursively, for every $\ell \in \{0,\ldots,k\}$ and $s \in [q_{\ell-1},q_{\ell})$,
\begin{equation*}  
v_\mu(s,x) := \zeta_{\ell}^{-1} \log \E \exp \Ll[ \zeta_{\ell} v_\mu\Ll(q_{\ell}, B_{2q_{\ell}} - B_{2s} + x \Rr) \Rr]  -(q_{\ell} - s).
\end{equation*}
Recall that when $\ell = 0$, we have $\zeta_0 = 0$ and we interpret the right side above as
\begin{equation*}  
\E \Ll[ v_\mu(q_0,B_{2q_1} - B_{2s} + x) \Rr] - (q_0-s).
\end{equation*}
By induction, we have that for every $\ell \in \{0,\ldots, k\}$, 
\begin{align*}  
& v_\mu(q_{\ell-1},x) \\
& = \zeta_\ell^{-1} \log \E_{y_{\ell}} \Ll[ \E_{y_{\ell+1}}^{\frac{\zeta_{\ell}}{\zeta_{\ell+1}}} \Ll[ \E_{y_{\ell+2}}^{\frac{\zeta_{\ell+1}}{\zeta_{\ell+2}}} \Ll[ \cdots \E_{y_{k}}^{\frac{\zeta_{k-1}}{\zeta_k}} \Ll[ \cosh^{\zeta_k} \Ll( x + \sum_{\ell' = \ell}^k \Ll( 2q_{\ell} - 2q_{\ell-1} \Rr) ^\frac 1 2 y_\ell \Rr)  \Rr] \cdots  \Rr]  \Rr]  \Rr] 
\\
& \qquad - (q_{k}-q_{\ell-1}),
\end{align*}
where here $\E_{y_\ell}$ denotes the integration of the variable $y_\ell$ according to the standard scalar Gaussian measure, and for $\ell = 0$, the right side above is interpreted as
\begin{equation*}  
\E_{y_0} \Ll[ \zeta_{1}^{-1}\log \Ll(  \E_{y_1}\Ll[\E_{y_2}^{\frac{\zeta_1}{\zeta_2}} \Ll[\cdots\Rr]\Rr] \Rr)  \Rr] - q_k. 
\end{equation*}
By Lemma~\ref{l.integr.PD} with $t = 0$ and $N = 1$, we deduce that $\bar F_1(0,\mu) = - v_\mu(0,0)$, and we have already seen in part (1) of Proposition~\ref{p.converge.psi} that $\bar F_N(0,\cdot) = \bar F_1(0,\cdot)$. Moreover, denoting, for every $\ell \in \{0,\ldots,k\}$ and $s \in [q_{\ell-1},q_\ell)$,
\begin{equation*}  
w_\mu(s,x) := \exp \Ll( \zeta_\ell (s-q_\ell) \Rr) \E \exp \Ll[ \zeta_\ell v_\mu\Ll(q_{\ell}, B_{2q_{\ell}} - B_{2s} + x \Rr) \Rr],
\end{equation*}
we have $\dr_s w_\mu +\dr_x^2w_\mu - \zeta_\ell w_\mu = 0$ on $[q_{\ell -1}, q_\ell) \times \R$, with continuity at the junction times $s \in \{q_0,\ldots,q_{k}\}$, and a change of variables then gives that $-v_\mu$ solves~\eqref{e.pde.init}. This shows that Proposition~\ref{p.init} holds whenever $\mu$ is a measure of finite support. The general case can then be obtained by continuity (the continuity of $\bar F_1(0,\cdot)$ is a consequence of Proposition~\ref{p.continuity}; for the continuity of $\mu \mapsto u_\mu(0,0)$, one can start by verifying that $\|\dr_x u_\mu\|_{L^\infty} \le 1$ using the maximum principle).
\end{proof}

%
%
%
%
%
%

\section{Proof of Theorem~\ref{t.main}}
\label{s.proof}

In this section, we give the proof of Theorem~\ref{t.main}. Recall that we interpret the solution of \eqref{e.hj} as being given by the Hopf-Lax formula in \eqref{e.hopflax}. The formula \eqref{e.hopflax} simplifies slightly in the case when $\mu = \de_0$, and thus the statement of Theorem~\ref{t.main} can be reformulated as follows.
\begin{proposition}[Hopf-Lax representation of Parisi formula]
\label{p.hopf-lax}
Assume \eqref{e.ass.sphere}, and fix the normalization $\xi(1) = 1$. For every $t > 0$, we have
\begin{multline*}  
\lim_{N \to \infty} -\frac 1 N\E \log\int \exp \Ll( \sqrt{2t} H_N(\sigma) - N t\Rr)   \, \d P_N(\sigma) \\
= \sup_{\mu \in \mcl P_{*}(\R_+)} \Ll( \psi(\mu) - t \int_{\R_+} \xi^*(t^{-1} s) \, \d \mu(s) \Rr) .
\end{multline*}

\end{proposition}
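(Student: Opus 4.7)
The plan is to reduce the claim to the Parisi formula already in the literature and then reconcile the two variational expressions via Legendre duality.

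First I would check that the left-hand side is exactly $\lim_N \bar F_N(t,\delta_0)$. Under assumption \eqref{e.ass.sphere} and the normalization $\xi(1) = 1$, one has $|\sigma|^2 = N$ on the support of $P_N$, so $Nt\, \xi(N^{-1}|\sigma|^2) = Nt$. Taking $\mu = \delta_0$ in the definition \eqref{e.def.FN} corresponds to $k = 0$, $\zeta_0 = 0$, $\zeta_1 = 1$, $q_0 = 0$, which makes $H_N'(\sigma,\emptyset) \equiv 0$ and $v_\emptyset = 1$. Hence $\bar F_N(t,\delta_0)$ is exactly the normalized free energy appearing on the left, and the convergence of its initial condition to $\psi$ is handled by Proposition~\ref{p.converge.psi}.

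Second, I would invoke the Parisi formula in the form established in \cite{gue03,Tpaper,Tbook1,Tbook2,pan} (the Crisanti--Sommers formula in the spherical case). This expresses $\lim_N \bar F_N(t,\delta_0)$ as an infimum (in our sign convention) over a class of functional order parameters, i.e.\ probability measures $\nu$ on the range of possible overlaps. The resulting functional splits into an initial piece depending only on $P_N$, which matches $\psi(\nu)$, and an interaction piece involving $\xi$ (or $\xi'$, after integration by parts) integrated against $\nu$ and the parameter $t$.

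Third, I would convert this infimum into the supremum on the right-hand side using the Legendre identity $t\, \xi^*(s/t) = \sup_{r \ge 0}(rs - t\, \xi(r))$. Concretely, I would integrate by parts in the Parisi interaction term to put it in the shape $\inf_{q(\cdot)} \int \bigl( t\, \xi(q(s)) - s\, q(s)\bigr)\, d\nu(s)$ after the change of variables coming from the quantile parametrization $X_\nu = F^{-1}_\nu(U)$ of \eqref{e.def.F-1}--\eqref{e.def.Xmu}; the pointwise (in $s$) optimization in $q \ge 0$ produces exactly $-t\int \xi^*(s/t)\, d\nu(s)$. The restriction $q \ge 0$ is legitimate because of the monotonicity $\partial_{q_\ell} \bar F_N \ge 0$ from Lemma~\ref{l.pos.dr_q} (which is what makes the convention of setting $\xi = +\infty$ on $(-\infty, 0)$ harmless, as discussed after \eqref{e.def.xi*}). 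Swapping the inf in $q(\cdot)$ and the inf in $\nu$, and flipping signs, turns the Parisi infimum into the supremum $\sup_\nu(\psi(\nu) - t\int \xi^*(s/t)\, d\nu(s))$ of the Hopf-Lax formula.

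The main obstacle is performing the change of parametrization and the Legendre step cleanly, while tracking the conventions (a Parisi measure on $[0,1]$ versus our $\nu \in \mcl P_*(\R_+)$, the sign conventions for $\bar F_N$, and the role of the normalization $\xi(1) = 1$), and verifying that the suprema are attained in the same regime in the two formulas. Once this dictionary is in place, Proposition~\ref{p.hopf-lax} and hence Theorem~\ref{t.main} follow directly.
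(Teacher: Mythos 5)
Your overall strategy (borrow the Parisi formula, then reinterpret via Legendre duality) is indeed the one the paper follows, but two genuine gaps remain, and both are what make the paper's Steps 2--4 necessary.

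First, the change of variables is not merely a quantile reparametrization of the Parisi measure $\nu\in\mcl P([0,1])$; it is the pushforward $\mu:=(t\xi')_*\nu$. Your formulas consistently have $\nu$ where $\mu$ should appear. The Parisi interaction term, after integration by parts and with $\xi(1)=1$, is $-t\int_{[0,1]}(r\xi'(r)-\xi(r))\,\d\nu(r)$, and the pointwise identity that turns this into a $\xi^*$-integral is $\xi^*(\xi'(r))=r\xi'(r)-\xi(r)$, not $t\xi^*(r/t)=t(r\xi'(r)-\xi(r))$. The former forces you to evaluate $\xi^*$ at $\xi'(r)$, i.e.\ at the pushforward point $s=t\xi'(r)$, which is precisely why $\mu=(t\xi')_*\nu$ is the right identification; writing $-t\int\xi^*(s/t)\,\d\nu(s)$ with the same $\nu$ is simply not equal to the Parisi term. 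The same remark applies to the initial piece: the linear Gaussian fields in the Ruelle cascade have variance increments $2t\xi'(q_\ell)-2t\xi'(q_{\ell-1})$, so the ``initial part'' of the Parisi functional is $\psi\bigl((t\xi')_*\nu\bigr)$, not $\psi(\nu)$.

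Second, and more seriously, after the pushforward you only get a supremum over measures supported in $[0,t\xi'(1)]$, while the claimed right-hand side is a supremum over all of $\mcl P_*(\R_+)$. You flag this as an ``obstacle'' but do not resolve it. The paper's Step 4 is a dedicated truncation argument: replacing $\mu$ by its truncation $\td\mu=(r\wedge t\xi'(1))_*\mu$ can only increase $\psi(\mu)-t\int\xi^*(s/t)\,\d\mu(s)$. This uses the Lipschitz estimate of Proposition~\ref{p.continuity} (to bound $\psi(\mu)-\psi(\td\mu)$ by $\E|X_\mu-X_{\td\mu}|$) together with the elementary convex-dual inequality $\xi^*(s)\ge s-\xi(1)$; neither is an automatic consequence of the Legendre step. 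Without this argument the equality between the Parisi supremum and the Hopf--Lax supremum is not established.

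A minor additional point: although you phrase the Parisi term as an $\inf_{q(\cdot)}$ of $t\xi(q)-sq$, the Parisi formula is a fixed integral, not itself an infimum; the infimum picture is a useful heuristic, but the rigorous step is the pointwise envelope identity $\xi^*(\xi'(r))=r\xi'(r)-\xi(r)$ combined with the pushforward, which collapses the would-be optimization exactly.
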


\begin{proof}
We first focus on the case when $P_N$ is the uniform probability measure on~$\{-1,1\}^N$. We decompose the argument for this case into four steps.

\smallskip

\emph{Step 1.} In this step, we recast the standard expression for Parisi's formula, borrowed from~\cite{pan}, in the following form: 
\begin{multline}  
\label{e.borrow}
\lim_{N \to \infty} -\frac 1 N \E \log \int \exp \Ll( \sqrt{2t} H_N(\sigma) -Nt\Rr)    \, \d P_N(\sigma) 
\\
=t + \sup_{\nu \in \mcl P(\Ll[0,1\Rr])}\Ll( \psi \Ll( (t\xi')(\nu) \Rr) -t\xi'(1) + t \int_0^1 s \xi''(s) \nu([0,s]) \, \d s \Rr) .
\end{multline}
On the right side, the notation $(t\xi')(\nu)$ denotes the image of the measure $\nu$ under the mapping $r \mapsto t\xi'(r)$. Let $\nu \in \mcl P([0,1])$ be a measure with finite support containing the extremal points $0$ and $1$. For some $k \in \N$ and parameters
\begin{equation*}  
0 = \zeta_{0} < \zeta_1 < \cdots < \zeta_{k} < \zeta_{k+1} = 1, \qquad 0 = q_{-1} = q_0 < q_1 < \cdots < q_k = 1,
\end{equation*}
we can represent this measure as
\begin{equation*}  
\nu = \sum_{\ell = 0}^{k} (\zeta_{\ell+1} - \zeta_{\ell}) \de_{q_\ell}.
\end{equation*}
The reason for the perhaps slightly surprising choice of setting $q_{-1} = q_0 = 0$ is that we have chosen here to include a term associated with the root of $\mcl A$, at level $\ell = 0$, in the definition \eqref{e.def.HNalpha}, while a different choice was taken in \cite{pan}. (The motivation for this inconsequential difference is that it then covers more naturally the situation in~\eqref{e.def.FNcirc} as a particular case. Relatedly, by default, the measures of finite support considered in \cite{pan} have an atom at zero.) 
In order to extract the free energy associated with the Hamiltonian $\sigma \mapsto \sqrt{2t} H_N(\sigma)$ from \cite[Theorem~3.1]{pan}, we need to replace $\xi$ by $2t\xi$ in \cite[(3.3)]{pan}. With this modification in place, and recalling Lemma~\ref{l.integr.PD}, we see that the quantity denoted $\E X_0$ in \cite[(3.11)]{pan} can be rewritten as
\begin{equation*}  
\E \log \Ll[  \sum_{\sigma \in \{\pm 1\}} \sum_{\al \in \mcl A} v_\al \exp \Ll( \sum_{\ell = 1}^k \Ll(2t\xi'(q_{\ell}) - 2t\xi'(q_{\ell-1})\Rr)^\frac 1 2 z_{\al_{|\ell}} \cdot \sigma \Rr)\Rr].
\end{equation*}
On the other hand, by \eqref{e.def.psi} and Proposition~\ref{p.converge.psi}, we have 
\begin{multline*}  
\psi\Ll((t\xi')(\nu)\Rr) \\
= -\E \log \Ll[ \frac 1 2 \sum_{\sigma \in \{\pm 1\}} \sum_{\al \in \mcl A} v_\al \exp \Ll( \sum_{\ell = 1}^k \Ll(2t\xi'(q_\ell) - 2t\xi'(q_{\ell-1})\Rr)^\frac 1 2 z_{\al_{|\ell}} \cdot \sigma -t\xi'(1)\Rr)\Rr].
\end{multline*}
We thus deduce that the quantity denoted $\E X_0$ in \cite[(3.11)]{pan} is
\begin{equation*}  
\log 2 - \psi\Ll((t\xi')(\nu)\Rr)  + t \xi'(1).
\end{equation*}
The finite-volume free energy is normalized slightly differently here and in \cite{pan}: there is a multiplicative factor of $2^{-N}$ hidden in the fact that $P_N$ is normalized to be a probability measure, and an additional minus sign, on the left side of~\eqref{e.borrow}. Combining these observations and appealing to \cite[Theorem~3.1]{pan} and to Proposition~\ref{p.continuity} yields~\eqref{e.borrow}.

\smallskip



\emph{Step 2}. We fix $\nu \in \mcl P([0,1])$, $t > 0$, and define $\mu := (t\xi')(\nu)$ to be the image of $\nu$ under the mapping~$r \mapsto t\xi'(r)$. 
In this step, we show that
\begin{equation}
\label{e.explicit.W}
 \int_{ \Ll[0,1\Rr] } \Ll( s\xi'(s) - \xi(s) \Rr)  \, \d \nu(s) = \int_{\R_+} \xi^*(t^{-1} s) \, \d \mu(s).
\end{equation}
By the definition of $\mu$ and a change of variables, we have
\begin{equation*}  
\int_{\R_+} \xi^*(t^{-1} s) \, \d \mu(s) = \int_{\Ll[0,1\Rr]} \xi^*(\xi'(s)) \, \d \nu(s).
\end{equation*}
Recall that
\begin{equation*}  
\xi^*(\xi'(s)) = \sup_{r \ge 0} \Ll(  r\xi'(s) - \xi(r)\Rr).
\end{equation*}
Since $\xi'(0) = 0$, for each $s > 0$, the supremum above is achieved at some $r > 0$, and calculating the derivative in $r$ shows that it is in fact achieved at $r = s$, since $\xi'$ is injective. That is, we have $\xi^*(\xi'(s)) = s\xi'(s) - \xi(s)$, and thus \eqref{e.explicit.W} holds.

\smallskip
\emph{Step 3.}
In this step, we show that
\begin{multline}  
\label{e.almost.hopf-lax}
\lim_{N \to \infty} -\frac 1 N\E \log\int \exp \Ll( \sqrt{2t} H_N(\sigma) - N t\Rr)   \, \d P_N(\sigma) \\
= \sup \Ll \{ \psi(\mu) - t \int_{\R_+} \xi^*(t^{-1} s) \, \d \mu(s) \ : \ \mu \in \mcl P(\R_+), \ \supp \mu \subset [0,t\xi'(1)] \Rr\} .
\end{multline}
We start by rewriting the last term in the supremum on the right side of \eqref{e.borrow}, by appealing to the following integration by parts formula: for every $f \in L^1([0,1])$,
\begin{equation}
\label{e.ibp}
\int_0^1 f(r) \, \nu([0,r]) \, \d r = \int_{ \Ll[ 0,1 \Rr] } \int_r^1 f(u) \, \d u \, \d \nu(r).
\end{equation}
This formula itself is a consequence of Fubini's theorem.
We notice that
\begin{equation*}  
\int_r^1 s \xi''(s) \, \d s = \xi'(1) - \xi(1)- \Ll(r\xi'(r)  - \xi(r)\Rr).
\end{equation*}
Recalling that we have fixed the normalization $\xi(1) = 1$ yields that
\begin{equation*}  
\int_0^1 s \xi''(s) \nu([0,s]) \, \d s 
= \xi'(1) - 1  - \int_{ \Ll[ 0,1 \Rr] } \Ll( r \xi'(r) - \xi(r) \Rr) \, \d \nu(r).
\end{equation*}
Combining this with \eqref{e.borrow}, \eqref{e.explicit.W}, and the fact that $\xi' : [0,1] \to [0,\xi'(1)]$ is bijective, we obtain \eqref{e.almost.hopf-lax}.

\smallskip

\emph{Step 4.} In order to conclude the proof (in the case of Ising spins), there remains to show that the supremum on the right side of \eqref{e.almost.hopf-lax} does not increase if we remove the restriction that the support of the measure $\mu$ be in $[0,t \xi'(1)]$. Let $\mu \in \mcl P_*(\R_+)$, and let $\td \mu$ denote the image of $\mu$ under the mapping $r \mapsto r \wedge (t\xi'(1))$, where we write $a \wedge b := \min(a,b)$.
We show that
\begin{equation}  
\label{e.localize.mu}
\psi(\mu) - t \int_{\R_+} \xi^*(t^{-1} s) \, \d \mu(s) 
\le 
\psi(\td \mu) - t \int_{\R_+} \xi^*(t^{-1} s) \, \d \td \mu(s) .
\end{equation}
By Proposition~\ref{p.continuity} and Fubini's theorem, we have
\begin{align}  
\notag
\psi(\mu) - \psi(\td \mu) 
 \le \E \Ll[ \Ll| X_\mu - X_{\td \mu} \Rr|  \Rr]  
 & = \int_0^{+\infty} \Ll|\mu([0,r]) - \td \mu([0,r]) \Rr| \, \d r
\\
\notag
&  = \int_{t\xi'(1)}^{+\infty} \mu((r,+\infty)) \, \d r 
\\
\notag
& =  \int_{t\xi'(1)}^{+\infty} \int_{\R_+}\1_{s \ge r} \, \d \mu(s) \, \d r 
\\
\label{e.first.bit}
& = \int_{\R_+} \Ll(s - t\xi'(1) \Rr)\1_{s \ge t\xi'(1)}\, \d \mu(s).
\end{align}
On the other hand, by the definition of $\td \mu$, we have
\begin{equation*}  
\int_{\R_+} \xi^*(t^{-1} s) \, \d \td \mu(s)  = \int_{\R_+} \xi^*\Ll((t^{-1} s) \wedge \xi'(1) \Rr) \, \d \mu(s),
\end{equation*}
and thus
\begin{multline}  
\label{e.second.bit}
\int_{\R_+} \xi^*(t^{-1} s) \, \d \mu(s) - \int_{\R_+} \xi^*(t^{-1} s) \, \d \td \mu(s) 
\\
= \int_{\R_+} \Ll(\xi^*(t^{-1}s) - \xi^*\Ll( \xi'(1)\Rr)\Rr)\1_{s \ge t\xi'(1)} \, \d \mu(s).
\end{multline}
Recall that $\xi^*(\xi'(1)) = \xi'(1) - \xi(1)$. By the definition of the convex dual, we also have that $\xi^*(s) \ge s - \xi(1)$. Hence, the integral on the right side of \eqref{e.second.bit} is bounded from below by
\begin{equation*}  
\int_{̣\R_+} \Ll( t^{-1} s -\xi'(1) \Rr) \1_{s \ge t \xi'(1)} \, \d \mu(s).
\end{equation*}
Combining this with \eqref{e.first.bit} yields \eqref{e.localize.mu} and thus completes the proof in the case of Ising spins.

\smallskip

\emph{Step 5.} We show Proposition~\ref{p.hopf-lax} in the case when $P_N$ is the uniform probability measure on the sphere $\{\si \in \R^N \ : \ |\si|^2 = N\}$. Using \cite[Corollary~4.1]{tal.sph} and arguing as in Step 1, one can check that the formula \eqref{e.borrow} is also valid in this case. The rest of the argument carries over without modification.
\end{proof}


%
%
%
%
%

\section{Finite-dimensional approximations}
\label{s.finite.dim}

In this last section, we lightly touch upon the question of giving an intrinsic meaning to the Hamilton-Jacobi equation \eqref{e.hj}. This allows to give some substance to the connection between this equation and the Hopf-Lax formula in \eqref{e.hopflax}. 

\smallskip

There already exists a rich literature on Hamilton-Jacobi equations in infinite-dimensional Banach spaces, as well as on the Wasserstein space of probability measures or more general metric spaces; see in particular \cite{cl1} for the former and \cite{gan08, amb14} for the latter. I will refrain from engaging with these works here, and only discuss finite-dimensional approximations of the solution to \eqref{e.hj}.

\smallskip

A simple way to obtain a finite-dimensional approximation of \eqref{e.hj} is to fix an integer $k \ge 1$ and restrict the space of allowed probability measures to those belonging to
\begin{equation*}  
\mcl P^{(k)}(\R_+) := \Ll\{ \frac 1 {k} \sum_{\ell = 1}^k \de_{x_\ell} \ : \ x_1, \ldots, x_k \ge 0 \Rr\} .
\end{equation*}
A natural discretization of the formula \eqref{e.hopflax} is then obtained by setting, for every $t \ge 0$ and $\mu \in \mcl P^{(k)}(\R_+)$,
\begin{equation*}  
f^{(k)}(t,\mu) := \sup_{\nu \in \mcl P^{(k)} (\R_+)} \Ll( \psi(\nu) - t \E \Ll[ \xi^* \Ll( \frac{X_\nu - X_\mu}{t} \Rr)  \Rr]  \Rr) .
\end{equation*}
Abusing notation slightly, we also write, for every $x \in \R_+^{k}$,
\begin{equation*}  
f^{(k)}(t,x) := f \Ll( t, \frac 1 {k} \sum_{\ell = 1}^k \de_{x_\ell} \Rr),  \quad \text{ and } \quad \psi^{(k)}(x) := \psi \Ll( \frac 1 {k} \sum_{\ell = 1}^k \de_{x_\ell} \Rr).
\end{equation*}
We note the following elementary observation.
\begin{lemma}
\label{l.rewrite.fk}
For every $t \ge 0$ and $x \in \R_+^{k}$, we have
\begin{equation}  
\label{e.rewrite.fk}
f\k(t,x) = \sup_{y \in \R^{k}_{+}} \Ll( \psi^{(k)}(y) - \frac t {k} \sum_{\ell = 1}^k \xi^* \Ll( \frac{y_\ell - x_\ell}{t} \Rr)  \Rr) .
\end{equation}
\end{lemma}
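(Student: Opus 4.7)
The plan is to unpack both sides of \eqref{e.rewrite.fk} and reduce the identity to an application of the monotone rearrangement principle for the convex cost~$\xi^*$. Given $\mu = \tfrac 1 k \sum_{\ell=1}^k \de_{x_\ell}$ and $\nu = \tfrac 1 k \sum_{\ell=1}^k \de_{y_\ell}$ in $\mcl P^{(k)}(\R_+)$, let $x_{(1)} \le \cdots \le x_{(k)}$ and $y_{(1)} \le \cdots \le y_{(k)}$ denote their nondecreasing rearrangements. The quantile functions $F_\mu^{-1}$ and $F_\nu^{-1}$ are then piecewise constant, taking respectively the values $x_{(\ell)}$ and $y_{(\ell)}$ on $\Ll[ (\ell-1)/k, \ell/k \Rr)$. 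Composing with the common uniform variable $U$ yields
\begin{equation*}
\E \Ll[ \xi^* \Ll( \frac{X_\nu - X_\mu}{t} \Rr) \Rr] = \frac 1 k \sum_{\ell=1}^k \xi^* \Ll( \frac{y_{(\ell)} - x_{(\ell)}}{t} \Rr).
\end{equation*}

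The second ingredient is the classical rearrangement inequality for convex cost functions: for a convex $c : \R \to \R$ and two $k$-tuples $(a_\ell)$, $(b_\ell)$, the sum $\sum_\ell c(a_{\sigma(\ell)} - b_\ell)$ is minimized, over permutations $\sigma$, precisely when $a$ and $b$ are sorted in the same order. This reduces to the four-point inequality $c(a - b') + c(a' - b) \ge c(a - b) + c(a' - b')$ whenever $a \le a'$ and $b \le b'$, itself a direct consequence of convexity. Since $\xi^*$ is convex on $\R$ as a supremum of affine functions, applying this with $c = \xi^*$ yields, for any $y \in \R_+^k$,
\begin{equation*}
\frac 1 k \sum_{\ell=1}^k \xi^* \Ll( \frac{y_\ell - x_\ell}{t} \Rr) \ge \frac 1 k \sum_{\ell=1}^k \xi^* \Ll( \frac{y_{(\ell)} - x_{(\ell)}}{t} \Rr),
\end{equation*}
with equality attainable by a suitable permutation of the entries of $y$.

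The equality \eqref{e.rewrite.fk} then follows by establishing each inequality. For the bound ``$\le$'', given any $\nu = \tfrac 1 k \sum_\ell \de_{y_\ell}$ contributing to the definition of $f^{(k)}(t,x)$, I would produce a vector $y^\star \in \R_+^k$ whose entries are a permutation of the support of~$\nu$, chosen so as to be monotonically matched with $x$; then $\psi^{(k)}(y^\star) = \psi(\nu)$ (since $\psi^{(k)}$ depends only on the empirical measure) while the cost associated with $y^\star$ equals the sorted sum, which by the first step equals $t\, \E \Ll[ \xi^* \Ll( (X_\nu - X_\mu)/t \Rr) \Rr]$. For the reverse bound, given any $y \in \R_+^k$, I would set $\nu = \tfrac 1 k \sum_\ell \de_{y_\ell}$ and use the rearrangement inequality to bound the cost for $y$ from below by the sorted cost, which once again equals $t\, \E \Ll[ \xi^* \Ll( (X_\nu - X_\mu)/t \Rr)  \Rr]$; combined with $\psi^{(k)}(y) = \psi(\nu)$ this completes the argument.

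The only mild obstacle is the bookkeeping of permutations needed to transfer the supremum from empirical measures to vectors; the analytic content --- convexity of $\xi^*$ and optimality of the monotone coupling on the real line --- is classical and already underlies the discussion surrounding \eqref{e.def.Xmu}.
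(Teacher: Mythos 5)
Your proposal is correct and follows essentially the same route as the paper: permutation invariance of $\psi^{(k)}$, the explicit identification of the monotone (quantile) coupling cost for empirical measures with the sorted sum, and the four-point convexity inequality showing that sorting $y$ to match the order of $x$ minimizes the cost. The paper phrases this as a swap-and-induct argument after reducing to nondecreasing $x$, but the analytic content is identical.
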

\begin{proof}
We introduce the notation
\begin{equation*}  
\R^{k\uparrow}_{+} := \Ll\{ x = (x_1,\ldots,x_k) \in \R_+^k \ : \ x_1 \le \cdots \le x_k \Rr\} .
\end{equation*}
Notice first that the quantities $f\k(t,x)$ and $\psi^{(k)}(x)$ are invariant under permutation of the coordinates of $x$. Hence, it suffices to prove the relation \eqref{e.rewrite.fk} under the additional assumption that $x \in \R^{k\uparrow}_+$. It is clear that equality holds if on the right side, we take the supremum over $y \in \R^{k\uparrow}_+$ only. We now verify that other orderings of a given vector $y$ yield a larger value for the sum on the right side of \eqref{e.rewrite.fk}. Indeed, fix $x \in \R^{k\uparrow}_+$, $y \in \R^k$, and assume that there exist $i < j \in \{1,\ldots,k\}$ such that $y_i \ge y_j$. By the convexity of $\xi^*$ and the fact that $x_i \le x_j$,, the function $u \mapsto  \xi^* \Ll( \frac{u - x_i}{t} \Rr) - \xi^* \Ll( \frac{u - x_j}{t} \Rr)$ is increasing. In particular,
\begin{equation*}  
 \xi^* \Ll( \frac{y_i - x_i}{t} \Rr) - \xi^* \Ll( \frac{y_i - x_j}{t} \Rr) \ge   \xi^* \Ll( \frac{y_j - x_i}{t} \Rr) - \xi^* \Ll( \frac{y_j - x_j}{t} \Rr),
\end{equation*}
and therefore
\begin{equation*}  
\xi^* \Ll( \frac{y_j - x_i}{t} \Rr) + \xi^* \Ll( \frac{y_i - x_j}{t} \Rr) \le \xi^* \Ll( \frac{y_i - x_i}{t} \Rr) + \xi^* \Ll( \frac{y_j - x_j}{t} \Rr) .
\end{equation*}
That is, whenever $i < j$ and $y_i \ge y_j$, replacing $y$ by the vector with the coordinates $y_i$ and $y_j$ interchanged can only reduce (or keep constant) the value of the quantity 
\begin{equation*}  
\sum_{\ell = 1}^k \xi^* \Ll( \frac{y_\ell - x_\ell}{t} \Rr) .
\end{equation*}
By induction, this implies that replacing the vector $y$ by the increasingly ordered sequence of coordinates of $y$ can only reduce (or keep constant) the quantity above. 
\end{proof}
The convex dual of the mapping
\begin{equation*}  
\Ll\{
\begin{array}{rcl}
\R^k & \to & \R \\
x & \mapsto & \frac 1 k \sum_{\ell = 1}^k \xi^*(x_\ell)
\end{array}
\Rr.
\end{equation*}
is
\begin{equation}  
\label{e.def.dualdual}
\Ll\{
\begin{array}{rcl}
\R^k & \to & \R \\
p & \mapsto & 
\Ll|
\begin{array}{ll}
\frac 1 k \sum_{\ell = 1}^k \xi(k \, p_\ell) & \text{if } p \in \R_+^k \\
+\infty & \text{otherwise}.
\end{array}
\Rr.
\end{array}
\Rr.
\end{equation}
It follows from Proposition~\ref{p.continuity} that $\psi^{(k)}$ is Lipschitz continuous, and from this, one can show that $f^{(k)}$ is Lipschitz continuous in $x$ and in $t$. In particular, the function $f^{(k)}$ is differentiable almost everywhere in $[0,\infty)\times \R_+^k$. Following classical arguments, see e.g.\ \cite{benton} or \cite[Theorem~3.3.5]{evans}, we thus deduce from \eqref{e.rewrite.fk} that at every $(t,x) \in (0,\infty)\times (0,\infty)^k$ at which $f^{(k)}$ is differentiable, we have
\begin{equation}  
\label{e.finite.dim.hj}
\dr_t f^{(k)}(t,x) - \frac  1 k \sum_{\ell = 1}^k \xi \Ll( k \dr_{x_\ell} f^{(k)}(t,x) \Rr) = 0.
\end{equation}
This identification also uses that $\dr_{x_\ell} f^{(k)}(t,x) \ge 0$, see \eqref{e.def.dualdual}. The latter property can be obtained as a consequence of the fact that $\dr_{x_\ell} \psi^{(k)} \ge 0$, which itself follows from~Lemma~\ref{l.pos.dr_q}. 

\smallskip

We can now verify that the equation in \eqref{e.finite.dim.hj} is formally consistent with a finite-dimensional interpretation of the Hamilton-Jacobi equation \eqref{e.hj}. In view of \eqref{e.taylor.measure}, and assuming ``smoothness'' of the function $f$, we must have, for every $\mu = k^{-1} \sum_{\ell = 1}^k \de_{x_\ell} \in \mcl P^{(k)}(\R_+)$ and $\ell \in \{1,\ldots, k\}$ that
\begin{equation*}  
\dr_{\mu} f(t,\mu,x_\ell) = k\, \dr_{x_\ell} f^{(k)}(t,x),
\end{equation*}
and thus
\begin{equation*}  
\int \xi(\dr_{\mu} f) \, \d \mu = \frac 1 k \sum_{\ell = 1}^k \xi \Ll(k \,  \dr_{x_\ell} f^{(k)} \Rr) .
\end{equation*}
We have thus obtained a formal relation between the Hamilton-Jacobi equation in~\eqref{e.hj} and that in \eqref{e.finite.dim.hj}, which itself can be rigorously connected with the Hopf-Lax formula \eqref{e.rewrite.fk}---see \cite{HJinfer} and \cite{HJrank} on how to handle the boundary condition on $\dr(\R_+^k)$ in the contexts of viscosity solutions and weak solutions respectively.




%

\medskip

\noindent \textbf{Acknowledgements.} I would like to thank Dmitry Panchenko for useful comments, in particular for pointing out an error in an earlier argument for the validity of Lemma~\ref{l.pos.dr_q}. I was partially supported by the ANR grants LSD (ANR-15-CE40-0020-03) and Malin (ANR-16-CE93-0003).

\small
\bibliographystyle{abbrv}
\bibliography{parisi}

\end{document}